\documentclass[12pt]{article}

\usepackage{amsmath}
\usepackage{amssymb}
\usepackage[mathscr]{eucal}
\usepackage{graphicx}
\usepackage{color}

\usepackage{bm}
\usepackage{mathrsfs}
\usepackage{amsthm}
\usepackage{enumerate}
\usepackage[mathscr]{eucal}
\usepackage{graphicx}

\newtheorem{Theorem}{\sc Theorem}%%%[section]
\newtheorem{Definition}[Theorem]{\sc Definition}
\newtheorem{Proposition}[Theorem]{\sc Proposition}
\newtheorem{Lemma}[Theorem]{\sc Lemma}

\newtheorem{Remark}[Theorem]{\sc Remark}
\newtheorem{Example}[Theorem]{\sc Example}

\newcommand{\R}{{\if mm {\rm I}\mkern -3mu{\rm R}\else \leavevmode
		\hbox{I}\kern -.17em\hbox{R} \fi}}

\normalbaselineskip=11pt
\def\sqr#1#2{{
		\vcenter{
			\vbox{\hrule height.#2pt
				\hbox{\vrule width.#2pt height#1pt \kern#1pt
					\vrule width.#2pt
				}
				\hrule height.#2pt
			}
		}
}}

\def\bar{\overline}
\def\real{\mathbb{R}}
\def\nat{\mathbb{N}}

\def\lista#1
{{ \itemindent 0.0cm \labelsep .2cm \leftmargin 0.8cm \rightmargin
		0.0cm \labelwidth 0.6cm \topsep 0.0mm
		\parsep 0.0mm
		\itemsep 0.0mm
		\begin{list}{}
			{ \setlength{\leftmargin}{.8cm} \setlength{\rightmargin}{0.0cm}
				\setlength{\parsep}{0.0mm} \setlength{\topsep}{.0mm}
				\setlength{\parskip}{.0cm} \setlength{\itemsep}{.0cm} }
			{#1}\end{list}} }

%%%%%%%%%%%%%%%%%%%%

\textheight=9 true in
\textwidth=6 true in
\topmargin 0 cm
\hoffset=-0.9cm
\voffset=-1.5cm
\frenchspacing

\begin{document}
\title{
Existence, comparison, and convergence results
for a class of elliptic hemivariational inequalities
%Sufficient Conditions to a General Robin Boundary Condition to Converge to a Dirichlet One in Elliptic Hemivariational Inequalities	
\thanks{
\,  This project has received funding from the European Union's Horizon 2020
Research and Innovation Programme under the Marie Sk{\l}odowska-Curie grant agreement No. 823731 CONMECH.
It is supported by
NSF of Guangxi, Grant No: 2018GXNSFAA281353,
Beibu Gulf University Project No. 2018KYQD06, and the projects financed by the Ministry of Science and
Higher Education of Republic of Poland under
Grants Nos. 4004/GGPJII/H2020/2018/0 and
440328/PnH2/2019. The first and the fourth authors are also
partially sponsored by
the Project PIP No. 0275 from CONICET-UA, Rosario, Argentina.
}}

\author{
Claudia M. Gariboldi  \footnote{\, Depto. Matem\'atica, FCEFQyN, Univ. Nac. de R\'io Cuarto, Ruta 36 Km 601,
5800 R\'io Cuarto, Argentina. E-mail: cgariboldi@exa.unrc.edu.ar.}, \ \
Stanis{\l}aw Mig\'orski \footnote{\, College of Applied Mathematics, Chengdu University of Information Technology, Chengdu 610225, Sichuan Province, P.R. China, and Jagiellonian University in Krakow, Chair of Optimization and Control, ul. Lojasiewicza 6, 30348 Krakow, Poland.
%%Tel.: +48-12-6646666.
E-mail address: stanislaw.migorski@uj.edu.pl.}, \ \
Anna Ochal \footnote{\, Jagiellonian University in Krakow, Chair of Optimization and Control, ul. Lojasiewicza 6, 30348 Krakow, Poland. E-mail address: anna.ochal@uj.edu.pl.}, \\
and \ Domingo A. Tarzia \footnote{\, Depto. Matem\'atica-CONICET, FCE, Univ. Austral, Paraguay 1950, S2000FZF Rosario, Argentina. E-mail: DTarzia@austral.edu.ar.}
}

\date{}

\maketitle
	
\noindent {\bf Abstract.} \
In this paper we study a class of elliptic boundary hemivariational inequa\-lities which originates in the steady-state heat conduction problem with nonmonotone multivalued subdifferential boundary condition on a portion of the boundary described by the Clarke generalized gradient
of a locally Lipschitz function.
First, we prove a new existence result for the inequality employing the theory of pseudomonotone operators.
Next, we give a result on comparison of solutions,
and provide sufficient conditions that guarantee the asymptotic behavior of solution, when the heat transfer coefficient tends to infinity.
Further, we show a result on the continuous dependence of solution on the internal energy and heat flux.
Finally, some examples of convex and nonconvex potentials
illustrate our hypotheses.

%In this paper we consider a family of problems for the Poisson
%equation dependent of a parameter $\alpha>0$ in a %$n$-dimensional
%bounded domain $\Omega$. On a portion of the boundary, we %formulate
%a generalization of the Robin boundary condition through the
%sub-differential of a function $j$ with a positive heat %transfer
%coefficient $\alpha$. We obtain suficcient condition on $j$ %in order
%to prove the strong convergence and monotony properties of the
%solution of these problems to the solution of the %corresponding
%Poisson equation with a Dirichlet boundary on the same %portion of
%the boundary of the domain. Moreover, we give some examples, %which
%satisfy the aforementioned sufficient conditions.

%\smallskip

\medskip
	
\noindent
{\bf Key words.}
Elliptic hemivariational inequality, asymptotic behavior,
Clarke genera\-lized gradient, mixed problem, convergence,
nonlinear elliptic equation.
	
%\smallskip

\medskip
	
\noindent
{\bf 2010 Mathematics Subject Classification. }
35J05, 35J65, 35J87, 49J45.

\medskip

{\thispagestyle{empty}} %NO BORRE ESTA LINEA

%\newpage

%%===============================
\section{Introduction}
%%===============================

We consider a bounded domain $\Omega$ in $\real^d$ whose
regular boundary $\Gamma $ consists of the union of three disjoint portions $\Gamma_{i}$, $i=1$, $2$, $3$
%, $\Gamma_{2}$ and $\Gamma_{3}$
with $|\Gamma_{i}|>0$, where $|\Gamma_i|$ denotes the $(d-1)$-dimensional
Hausdorff measure of the portion $\Gamma_i$ on $\Gamma$.
The outward normal vector on
the boundary is denoted by $n$.
We formulate the following two steady-state heat conduction problems
%$S$ and $S_{\alpha }$
with mixed boundary conditions:
\begin{eqnarray}
&&
-\Delta u=g \ \ \mbox{in} \ \ \Omega,
\ \ \quad u\big|_{\Gamma_{1}}=0,
\ \ \quad-\frac{\partial u}{\partial n}\big|_{\Gamma_{2}}=q,
\ \ \quad u\big|_{\Gamma_{3}}=b,
\label{P} \\[2mm]
%\end{eqnarray}
%\begin{eqnarray}
&&
-\Delta u=g \ \ \mbox{in} \ \ \Omega,
\ \ \quad u\big|_{\Gamma _{1}}=0,
\ \ \quad
-\frac{\partial u}{\partial n}\big|_{\Gamma_{2}}=q,
\ \ \quad -\frac{\partial u}{\partial n}\big|_{\Gamma_{3}}
=\alpha (u-b),
\label{Palfa}
\end{eqnarray}
where $u$ is the temperature in $\Omega$,
$g$ is the internal energy in $\Omega$,
$b$ is the temperature on $\Gamma_{3}$ for (\ref{P})
and the temperature of the external
neighborhood of $\Gamma_{3}$ for (\ref{Palfa}),
$q$ is the heat flux on $\Gamma_{2}$
and $\alpha>0$ is the heat transfer coefficient on $\Gamma_{3}$,
which satisfy the hypothesis:
$g\in L^2(\Omega)$,
$q\in L^2(\Gamma_2)$ and
$b\in H^{\frac{1}{2}}(\Gamma_3)$.

%We denote by $u_{\infty}$ and $u_{\alpha}$
%(for each $\alpha >0$) the solution of the heat conduction %problems (\ref{P}) and (\ref{Palfa}), respectively.
Throughout the paper we use the following notation
\begin{eqnarray*}
&& V=H^{1}(\Omega),
\quad
V_{0}=\{v\in V \mid v = 0 \ \ \mbox{on} \ \ \Gamma_{1} \},
\\[2mm]
&&
K=\{v\in V \mid
v = 0 \ \ \mbox{on} \ \ \Gamma_{1},\
v = b \ \ \mbox{on} \ \ \Gamma_{3} \},
\quad
K_{0}=\{v\in V \mid
v = 0 \ \ \mbox{on} \ \ \Gamma_{1}\cup \Gamma_3 \}, \\[2mm]
%&&
%H=L^2(\Omega), \quad
%Q=L^2(\Gamma_2), \quad
%(g,h)_{H}=\int_{\Omega}g h\,dx,
%\quad (q,\eta)_{Q}=\int_{\Gamma_{2}}q\eta \,d\Gamma, \\
&&
a(u,v)=\int_{\Omega }\nabla u \, \nabla v \, dx,
\quad a_{\alpha}(u,v)=a(u,v)+\alpha \int\limits_{\Gamma_3}\gamma(u) \gamma (v) d\Gamma, \\
&&
L(v)= \int_{\Omega}g v \,dx -
\int_{\Gamma_{2}}q \gamma (v) \,d\Gamma,
%%(g,v)_{H}-(q,\gamma_{0}(v))_{Q},
\quad
L_{{\alpha}}(v)=L(v)
+\alpha \int\limits_{\Gamma_3}b \gamma(v) \, d\Gamma,
\end{eqnarray*}
where $\gamma \colon V \to L^2(\Gamma)$
denotes the trace operator on $\Gamma$.
In what follows, we write $u$ for the trace of a function $u \in V$ on the boundary.
In a standard way, we obtain the following variational formulations of (\ref{P}) and (\ref{Palfa}), respectively:
\begin{eqnarray}
&&
\hspace{-1cm}
\mbox{find} \ \ u_{\infty}\in K \ \ \mbox{such that}\ \
a(u_{\infty},v)=L(v)
\ \ \mbox{for all} \ \ v\in K_{0},
\label{Pvariacional} \\[2mm]
%\end{eqnarray}
%\begin{eqnarray}
&&
\hspace{-1cm}
\mbox{find} \ \ u_{\alpha}\in V_0 \ \ \mbox{such that}\ \
a_{\alpha}(u_{\alpha },v)=L_{\alpha }(v)
\ \ \mbox{for all} \ \ v\in V_{0}.
\label{Palfavariacional}
\end{eqnarray}
The standard norms on $V$ and $V_0$ are denoted by
\begin{eqnarray*}
&&
\| v \|_V = \Big( 
\| v \|^2_{L^2(\Omega)}
+ \| \nabla v \|^2_{L^2(\Omega;\real^d)} \Big)^{1/2}
\ \ \mbox{for} \ \ v \in V, \\ [2mm]
&&
\| v \|_{V_0} = \| \nabla v \|_{L^2(\Omega;\real^d)}
\ \ \mbox{for} \ \ v \in V_0.
\end{eqnarray*}
It is well known by the Poincar\'e inequality,
see~\cite[Proposition 2.94]{CLM}, that on $V_0$ the above two norms
are equivalent. Note that the form $a$ is bilinear, symmetric, continuous and coercive with constant $m_a > 0$, i.e.
\begin{equation}\label{coercive}
%\exists \lambda_{1}>0\quad \text{such that}\quad
a(v, v) = \|v\|^{2}_{V_0} \ge m_a \|v\|^{2}_{V}
\ \ \mbox{for all} \ \ v\in V_{0}.
\end{equation}

It is well known that the regularity of solution to
the mixed elliptic problems (\ref{P}) and (\ref{Palfa}) is problematic in the neighborhood of a part of the boundary,
see for example the monograph~\cite{G}.
A regularity results for elliptic problems
with mixed boundary conditions can be found
in~\cite{AK,BBP,LCB}. Moreover, sufficient hypothesis on the
data in order to have $H^{2}$ regularity for elliptic variational inequalities is given in~\cite{R}.
We remark that, under additional hypotheses on the data
$g$, $q$ and $b$, problems (\ref{P}) and (\ref{Palfa})
can be considered as steady-state two phase Stefan problems, see, for example,~\cite{GT,TT,Ta1,Ta3}.

The problems (\ref{Pvariacional}) and (\ref{Palfavariacional})
have been extensively studied in several papers such
as~\cite{GT,TT,Ta,Ta1,Ta2}.
Some properties of monotonicity and convergence, when the parameter $\alpha$ goes to infinity,
obtained in the aforementioned works, are recalled in the following result.
\begin{Theorem}\label{teor1}
If the data satisfy
$b=const.> 0$, $g\in L^2(\Omega)$ and $q\in L^2(\Gamma_2)$ with the properties $q\geq 0$ on $\Gamma_{2}$ and
$g\leq 0$ in $\Omega$, then
\begin{itemize}
\item [\rm (i)] $u_{\infty}\leq b$ \ {\rm in} \ $\Omega$,
\item [\rm (ii)] $u_{\alpha}\leq b$ \ {\rm in} \ $\Omega$,
\item [\rm (iii)] $u_{\alpha}\leq u_{\infty}$
\ {\rm in} \ $\Omega$,
\item [\rm (iv)] {\rm if} \ $\alpha_{1} \leq \alpha_{2}$,
\ {\rm then} \ $u_{\alpha_{1}}\leq u_{\alpha_{2}}$ \ {\rm in} \ $\Omega$,
\item [\rm (v)] $u_{\alpha} \to u_{\infty}$ \ {\rm in} \ $V$,
\ {\rm as} \ $\alpha \to \infty$.
\end{itemize}
\end{Theorem}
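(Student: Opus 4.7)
My plan is to prove (i)--(iv) by a uniform maximum-principle trick using non-negative-part test functions, and to prove (v) by the standard penalty-method template. All four comparisons will rely on the identity $a(u-w,(u-w)^+)=\|\nabla(u-w)^+\|^2_{L^2(\Omega)}$ together with the coercivity (\ref{coercive}), so that once the gradient of the positive part vanishes and that part is zero on $\Gamma_1$, the positive part itself vanishes in $\Omega$.

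For (i), I would insert $v=(u_\infty-b)^+$ into (\ref{Pvariacional}); because $u_\infty=0\le b$ on $\Gamma_1$ and $u_\infty=b$ on $\Gamma_3$, this $v$ lies in $K_0$, and since $b$ is a constant one has $a(u_\infty,v)=a(u_\infty-b,v)=\|\nabla(u_\infty-b)^+\|^2_{L^2(\Omega)}$, while the sign assumptions $g\le 0$ and $q\ge 0$ force $L(v)\le 0$. Part (ii) uses the same test function in (\ref{Palfavariacional}); the only new effect is an extra non-negative boundary term $\alpha\|\gamma(u_\alpha-b)^+\|^2_{L^2(\Gamma_3)}$ on the left-hand side, while the right-hand side is still $L(v)\le 0$. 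For (iii), part (ii) together with $u_\infty=b$ on $\Gamma_3$ shows $v=(u_\alpha-u_\infty)^+\in K_0$; subtracting (\ref{Pvariacional}) from (\ref{Palfavariacional}) at this $v$, the $\alpha$ terms disappear because $\gamma v=0$ on $\Gamma_3$, and one obtains $\|\nabla v\|^2_{L^2(\Omega)}=0$. For (iv), test with $v=(u_{\alpha_1}-u_{\alpha_2})^+\in V_0$ and rearrange the two $a_\alpha$ identities into
\[
\|\nabla v\|^2_{L^2(\Omega)}+\alpha_1\|\gamma v\|^2_{L^2(\Gamma_3)}=(\alpha_1-\alpha_2)\int_{\Gamma_3}(b-\gamma u_{\alpha_2})\,\gamma v\,d\Gamma,
\]
whose right-hand side is $\le 0$ by (ii) and $\alpha_1\le\alpha_2$, while the left-hand side is $\ge 0$, forcing $v\equiv 0$.

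Part (v) is the main obstacle. I plan to fix a lifting $\tilde b\in V$ with $\tilde b=0$ on $\Gamma_1$ and $\gamma\tilde b=b$ on $\Gamma_3$ (available because the $\Gamma_i$ are disjoint and $b\in H^{1/2}(\Gamma_3)$), so that $u_\alpha-\tilde b\in V_0$ is admissible in (\ref{Palfavariacional}). A direct expansion then yields the energy identity
\[
\|u_\alpha\|^2_{V_0}+\alpha\,\|\gamma u_\alpha-b\|^2_{L^2(\Gamma_3)}=L(u_\alpha-\tilde b)+a(u_\alpha,\tilde b),
\]
and Young's inequality together with (\ref{coercive}) absorbs the $u_\alpha$-terms on the right to produce a uniform $V_0$-bound on $u_\alpha$ and $\alpha\|\gamma u_\alpha-b\|^2_{L^2(\Gamma_3)}\le C$. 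Any weak limit $u^*$ in $V_0$ must therefore satisfy $\gamma u^*=b$ on $\Gamma_3$, so $u^*\in K$; passing to the limit in (\ref{Palfavariacional}) restricted to test functions $v\in K_0$ (for which $\gamma v=0$ on $\Gamma_3$ makes the $\alpha$ terms drop out) identifies $u^*=u_\infty$ by uniqueness in (\ref{Pvariacional}), and (iv) implies convergence of the whole net rather than just a subsequence. To upgrade weak to strong convergence in $V$, I would compare the identity above in the limit with $\|u_\infty\|^2_{V_0}=L(u_\infty-\tilde b)+a(u_\infty,\tilde b)$, obtained by testing (\ref{Pvariacional}) with $u_\infty-\tilde b\in K_0$; this forces $\limsup\|u_\alpha\|_{V_0}\le\|u_\infty\|_{V_0}$, weak lower semi-continuity supplies the matching $\liminf$, and in the Hilbert space $V_0$ weak convergence plus norm convergence is strong convergence, which by Poincar\'e is equivalent to convergence in $V$. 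The delicate step is constructing the lifting $\tilde b$ and tracking the mixed signs in the energy identity so that the $V_0$-bound and the $L^2(\Gamma_3)$-decay emerge simultaneously.
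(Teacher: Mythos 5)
Your proposal is correct, but it is worth noting that the paper never proves this theorem directly: Theorem~1 is \emph{recalled} from earlier works (Garguichevich--Tarzia, Tabacman--Tarzia, Tarzia), and the paper's own machinery instead establishes the generalizations --- Theorem~5 and Proposition~4 for the comparison/monotonicity properties and Theorem~6 for the convergence --- of which Theorem~1 is the special case $j(r)=\frac{1}{2}(r-b)^2$. Your treatment of (i)--(iv) is in the same spirit as those proofs: where you insert $(u-w)^+$ into the variational \emph{equalities}, the paper takes $v=-w^+$ in the hemivariational inequality and kills the boundary term via $H(j)$(d); your computation in (iv) is exactly what condition (\ref{HHH}) abstracts, and your explicit identity $\|\nabla v\|^2+\alpha_1\|\gamma v\|^2_{L^2(\Gamma_3)}=(\alpha_1-\alpha_2)\int_{\Gamma_3}(b-\gamma u_{\alpha_2})\gamma v\,d\Gamma$, with its sign read off from (ii), is the transparent linear-case version. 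For (v) you take a genuinely different route in two places. First, your identification step exploits linearity: for $v\in K_0$ the $\alpha$-terms in (\ref{Palfavariacional}) vanish \emph{exactly}, so $a(u_\alpha,v)=L(v)$ holds for every $\alpha$ and passes to the weak limit trivially --- cleaner than the paper's Theorem~6, which needs upper semicontinuity of $j^0$, dominated convergence, and hypothesis $(H_1)$ to conclude $\gamma u^*=b$; your substitute is the quantitative decay $\alpha\|\gamma u_\alpha-b\|^2_{L^2(\Gamma_3)}\le C$, which the nonsmooth setting does not afford. Second, for strong convergence you use the energy identity plus weak-plus-norm convergence in the Hilbert space $V_0$, whereas the paper gets $m_a\|u_\infty-u_\alpha\|^2_V\le a(u_\infty,u_\infty-u_\alpha)+L(u_\alpha-u_\infty)\to 0$ directly by testing with $u_\infty-u_\alpha$ and using $u_\infty=b$ on $\Gamma_3$; the paper's estimate is shorter and avoids your lifting entirely. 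One small repair to your argument: the existence of the lifting $\tilde b$ is not automatic from ``$\Gamma_i$ disjoint and $b\in H^{1/2}(\Gamma_3)$'' (extension by zero of a constant across an interface need not lie in $H^{1/2}(\Gamma)$ when $\Gamma_1$ or $\Gamma_3$ touch), but since the theorem presupposes $u_\infty\in K$, you may simply take $\tilde b=u_\infty$, which both legitimizes and substantially shortens your part (v).
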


The main goal of this paper is to study a generalization
of problem (\ref{Palfa}) for which we provide sufficient conditions that guarantee
the comparison properties and asymptotic behavior, as $\alpha \to \infty$, stated in Theorem~\ref{teor1}.
Moreover, for a more general problem, we also show a result on the continuous dependence of solution
on the data $g$ and $q$.

The mixed nonlinear boundary value problem for the elliptic equation under consideration reads as follows.
\begin{equation}\label{Pjalfa}
-\Delta u=g \ \ \mbox{in} \ \ \Omega,
\ \quad u\big|_{\Gamma _{1}}=0,
\ \quad  -\frac{\partial u}{\partial n}\big|_{\Gamma_{2}}=q,  \ \quad -\frac{\partial u}{\partial n}\big|_{\Gamma_{3}}
\in \alpha \, \partial j(u).
\end{equation}
Here $\alpha$ is a positive constant while
the function $j \colon \Gamma_{3} \times \real \to \real$, called a superpotential (nonconvex potential),
is such that $j(x, \cdot)$ locally Lipschitz for a.e. $x \in \Gamma_3$
and not necessary differentiable.
Since in general $j(x, \cdot)$ is nonconvex, so the multivalued condition on $\Gamma_3$ in problem (\ref{Pjalfa})
is described by a nonmonotone relation expressed by the generalized gradient of Clarke.
Such multivalued relation in problem (\ref{Pjalfa}) is met
in certain types of steady-state heat conduction problems
(the behavior of a semipermeable membrane of finite
thickness, a temperature control problems, etc.).
Further, problem (\ref{Pjalfa}) can be considered as a prototype of several boundary semipermeability models,
see~\cite{MO,NP,P,ZLM}, which are motivated by problems arising in hydraulics, fluid flow problems through porous media,
and electrostatics, where the solution represents the pressure and the electric potentials.
Note that the analogous problems with maximal monotone multivalued boundary relations (that is the case when $j(x, \cdot)$
is a convex function) were considered in~\cite{Barbu,DL},
see also references therein.

Under the above notation,
the weak formulation of the elliptic problem (\ref{Pjalfa})
becomes the following boundary hemivariational inequality:
\begin{equation}\label{Pj0alfavariacional}
\mbox{find} \ \ u \in V_0 \ \ \mbox{such that} \ \
a(u,v) + \alpha \int_{\Gamma_{3}}j^{0}(u;v)\, d\Gamma
\geq L(v) \ \ \mbox{\rm for all} \ \  v\in V_{0}.
\end{equation}
Here and in what follows we often omit the variable
$x$ and we simply write $j(r)$ instead of $j(x, r)$.
Observe that if $j(x, \cdot)$ is a convex function
for a.e. $x \in \Gamma_3$, then
the problem \eqref{Pj0alfavariacional} reduces
to the variational inequality of second kind:
\begin{equation}\label{VI11}
\mbox{find} \ u \in V_0 \ \mbox{such that} \ 	
a(u,v-u) +\alpha \int_{\Gamma_{3}}(j(v) - j(u)) \, d\Gamma
\geq L(v-u) \ \ \mbox{\rm for all} \ \  v\in V_{0}.
\end{equation}
Note that when $j(r) = \frac{1}{2} (r-b)^2$, problem (\ref{VI11}) reduces to a variational inequality corresponding to problem (\ref{Palfa}).
Several other examples of convex potentials can be found in various diffusion problems. For instance, the following convex functions:% $j(r) = |r|$,
$$ j(r) = |r|,\ \ \
j(r) =
\begin{cases}
\beta (r-c)^5 &\text{{\rm if} \ $r \ge c,$} \\
0 &\text{{\rm if} \ $r < c,$}
\end{cases}
\ \ \mbox{and} \ \
j(r) =
\begin{cases}
\beta r^{9/4} &\text{{\rm if} \ $r \ge 0,$} \\
0 &\text{{\rm if} \ $r < 0,$}
\end{cases}
$$
with suitable constants $\beta > 0$ and $c \in \real,$
appear in models which describe a free boundary problem with Tresca condition, see~\cite{BUTA},
the Stefan-Boltzman heat radiation law,
and the natural convection, respectively, see~\cite{Barbu,Kawohl}, and the references therein
for further applications and extensions.
On the other hand, the stationary heat conduction models with nonmonotone multivalued subdifferential interior and boundary semipermeability relations
can not be described by convex potentials.
They use locally Lipschitz potentials and their weak formulations lead to hemivariational inequalities, see~\cite[Chapter~5.5.3]{NP} and~\cite{P}.

We mention that theory of hemivariational and variational inequalities has been proposed in the 1980s
by Panagiotopoulos, see~\cite{NP,P0,P1},
as variational formulations of important classes of inequality problems in mechanics.
In the last few years, new kinds of variational, hemivariational, and variational-hemivariational
inequalities have been investigated, see recent monographs~\cite{CLM,MOS,SM},
and the theory has emerged today as a new and interesting branch of applied mathematics.

The rest of the paper is structured as follows.
In Section~\ref{Preliminaries} we provide a new existence
result for problem (\ref{Pj0alfavariacional}).
In Section~\ref{Comparison} we establish two comparison properties for solutions to problem (\ref{Pj0alfavariacional}).
The convergence result of solution of problem  (\ref{Pj0alfavariacional}) to the solution of problem  (\ref{Pvariacional}), when the parameter $\alpha$ goes to
infinity, is provided in Section~\ref{Asymptotic}.
In Section~\ref{Continuous} we study the continuous dependence
of solution to problem (\ref{Pj0alfavariacional}) on the internal energy $g$ and the heat flux $q$.
The proofs are based on arguments of compactness, lower semicontinuity, monotonicity, various estimates,
the theory of elliptic hemivariational
inequalities and nonsmooth analysis~\cite{C,DMP,DMP1,DL,GLMOP,MOS,
P1,SST,SM,Z}.
Finally, in Section~\ref{Examples}
we deliver several examples of convex and nonconvex potentials
which satisfy the hypotheses on function $j$ required
in this paper.

\section{Preliminaries}\label{Preliminaries}

In this section first recall standard notation and preliminary concepts, and then provide a new result on existence of solution to the elliptic hemivariational
inequality~(\ref{Pj0alfavariacional}).

Let $(X, \| \cdot \|_{X})$ be a reflexive Banach space,  $X^{*}$ be its dual, and $\langle \cdot, \cdot \rangle$ denote the duality between $X^*$ and $X$.
For a real valued function defined
on $X$, we have the following
definitions~\cite[Section~2.1]{C} and~\cite{DMP,MOS}.
%%\cite[Definition~37, p.121]{SM}.
\begin{Definition}
	A function $\varphi \colon X\rightarrow \mathbb{R}$
	is said to be locally Lipschitz, if for every $x\in X$
	there exist $U_{x}$ a neighborhood of $x$ and a constant $L_{x}>0$ such that
	$$
	|\varphi(y)-\varphi(z)|\leq L_{x}\|y-z\|_{X}
	\ \ \mbox{\rm for all} \ \ y, z\in U_{x}.
	$$
	For such a function the generalized (Clarke) directional derivative of $j$ at the point $x\in X$ in the direction
	$v\in X$ is defined by
	$$
	\varphi^{0}(x;v)=\limsup\limits_{y \rightarrow x, \, \lambda \rightarrow 0^{+}}
	\frac{\varphi(y +\lambda v)-\varphi(y)}{\lambda} \, .
	$$
	The generalized gradient (subdifferential)
	of $\varphi$ at $x$ is a subset of the dual space $X^{*}$ given by
	$$
	\partial \varphi(x)=\{\zeta\in X^{*} \mid \varphi^{0}(x;v)\geq \langle
	\zeta,v\rangle \ \ \mbox{\rm for all} \ \  v \in X\}.
	$$
%	The function $\varphi$ is said to be regular (in the sense of % Clarke) at the point $x\in X$, if for all $v\in X$ the
%   one-sided %directional derivative
%	$\varphi'(x,v)$
%	exists and $\varphi^{0}(x;v)=\varphi'(x,v)$.
\end{Definition}

We shall use the following properties of the generalized directional derivative and the generalized
gradient, see~\cite[Proposition~3.23]{MOS}.
\begin{Proposition}\label{PROP1}
	Assume that $\varphi\colon X\rightarrow \mathbb{R}$ is a locally Lipschitz function. Then the following hold:
	\begin{itemize}
		\item [\rm (i)]
		for every $x\in X$, the function $X\ni v\mapsto \varphi^{0}(x;v)\in \mathbb{R}$ is positively homogeneous,
		and subadditive, i.e.,
		\begin{eqnarray*}
			&&
			\varphi^{0}(x;\lambda v)=\lambda \varphi^{0}(x;v)
			\ \ \mbox{\rm for all} \ \ \lambda \geq 0, \ v\in X, \\[1mm]
			&&
			\varphi^0(x; v_1+v_2) \le
			\varphi^0(x; v_1) + \varphi^0(x; v_2)
			\ \ \mbox{\rm for all} \ \ v_1, v_2 \in X,
		\end{eqnarray*}
		respectively.
		\item [\rm (ii)]
		for every $x\in X$, we have $\varphi^{0}(x;v)
		=\max\{\langle\zeta, v\rangle \mid \zeta \in \partial \varphi(x)\}$.
		\item [\rm (iii)]
		the function $X \times X \ni (x, v) \mapsto \varphi^0(x; v) \in \real$ is upper semicontinuous.
        \item [\rm (iv)]
        for every $x\in X$, the gradient $\partial\varphi (x)$ is a nonempty, convex, and weakly compact subset of~$X^*$.
        \item [\rm (v)]
        the graph of the generalized gradient $\partial \varphi$ is closed in $X\times ({\it weak\mbox{--}}X^*)$--topology.
	\end{itemize}
\end{Proposition}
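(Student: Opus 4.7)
The plan is to verify the five assertions directly from the definitions of $\varphi^0(x;\cdot)$ and $\partial\varphi(x)$, using only the local Lipschitz property of $\varphi$ and the Hahn--Banach theorem. All five are classical facts of nonsmooth analysis; the core analytic input is a careful manipulation of the $\limsup$ in the definition of $\varphi^0$.

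I would first dispatch (i). Positive homogeneity comes from the substitution $\mu=\lambda t$ in the defining $\limsup$ (the case $\lambda=0$ is trivial). For subadditivity I would use the telescoping identity
\[
\frac{\varphi(y+t(v_{1}+v_{2}))-\varphi(y)}{t}=\frac{\varphi((y+tv_{1})+tv_{2})-\varphi(y+tv_{1})}{t}+\frac{\varphi(y+tv_{1})-\varphi(y)}{t},
\]
observing that $y+tv_{1}\to x$ as $(y,t)\to(x,0^{+})$, so the first quotient is dominated in the $\limsup$ by $\varphi^{0}(x;v_{2})$ and the second by $\varphi^{0}(x;v_{1})$. For (iii), given $(x_{n},v_{n})\to(x,v)$, I would choose $y_{n}$ and $t_{n}\downarrow 0$ realising $\varphi^{0}(x_{n};v_{n})$ up to $1/n$ and then replace $v_{n}$ by $v$ in the difference quotient at the cost of a Lipschitz error $L\|v_{n}-v\|$ valid on a fixed neighbourhood of $x$; passing to $\limsup_{n}$ produces $\varphi^{0}(x;v)$ on the right.

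For (ii), subadditivity and positive homogeneity from (i), combined with the bound $|\varphi^{0}(x;v)|\le L\|v\|$ inherited from the Lipschitz constant, show that $\varphi^{0}(x;\cdot)$ is a continuous sublinear functional on $X$. Fix $v_{0}\in X$; define $\zeta_{0}$ on $\mathrm{span}\{v_{0}\}$ by $\zeta_{0}(tv_{0})=t\,\varphi^{0}(x;v_{0})$, and apply the Hahn--Banach theorem to extend $\zeta_{0}$ to a continuous linear $\zeta\in X^{*}$ satisfying $\langle\zeta,v\rangle\le\varphi^{0}(x;v)$ for every $v\in X$. By definition $\zeta\in\partial\varphi(x)$, and $\langle\zeta,v_{0}\rangle=\varphi^{0}(x;v_{0})$, which proves both (ii) and the nonemptiness assertion of (iv). Convexity of $\partial\varphi(x)$ is immediate from linearity of the duality pairing, and the same Lipschitz bound gives $\|\zeta\|_{X^{*}}\le L$ for every $\zeta\in\partial\varphi(x)$; since $X$ is reflexive, a bounded weakly closed subset of $X^{*}$ is weakly compact, so (iv) reduces to weak closedness, which is exactly (v). Part (v) then drops out of (iii): if $x_{n}\to x$ in $X$ and $\zeta_{n}\to\zeta$ weakly-$*$ in $X^{*}$ with $\zeta_{n}\in\partial\varphi(x_{n})$, passing to the $\limsup$ in the inequality $\langle\zeta_{n},v\rangle\le\varphi^{0}(x_{n};v)$ and invoking (iii) gives $\langle\zeta,v\rangle\le\varphi^{0}(x;v)$ for every $v\in X$.

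The main obstacle I expect is the max formula (ii), because it requires both the Hahn--Banach extension and the somewhat delicate verification that $\varphi^{0}(x;\cdot)$ is genuinely sublinear and finite-valued; once this is in hand, items (iv) and (v) become essentially mechanical consequences of (ii) and (iii).
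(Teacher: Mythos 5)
Your proposal is correct: the telescoping estimate for subadditivity, the Lipschitz-perturbation argument for upper semicontinuity, the Hahn--Banach extension of $\zeta_0$ on $\mathrm{span}\{v_0\}$ dominated by the sublinear functional $\varphi^0(x;\cdot)$ for the max formula and nonemptiness, and the deduction of weak compactness and graph closedness from the bound $\|\zeta\|_{X^*}\le L$, reflexivity, and part (iii) are exactly the standard arguments. The paper does not prove this proposition at all --- it quotes it from \cite[Proposition~3.23]{MOS}, which in turn follows Clarke \cite{C} --- and the proofs given there proceed along the same lines as yours, so there is nothing to reconcile.
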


Now, we pass to a result on existence of solution to the elliptic hemivariational inequality:
\begin{equation}\label{abstract}
\mbox{\rm find} \ \ u \in V_0 \ \ \mbox{\rm such that} \ \
a(u, v) + \alpha \int_{\Gamma_{3}}j^{0}(u; v)\, d\Gamma
\geq f(v) \ \ \mbox{\rm for all} \ \  v\in V_{0}.
\end{equation}
We admit the following standing hypothesis.

\medskip

\noindent
${\underline{H(j)}}$: $j\colon \Gamma_3 \times \real \to \real$ is such that

\smallskip

\noindent
\quad (a) $j(\cdot, r)$ is measurable for all $r \in \real$,

\smallskip

\noindent
\quad (b)
$j(x, \cdot)$ is locally Lipschitz for a.e. $x \in \Gamma_3$,

\smallskip

\noindent
\quad (c)
there exist $c_0$, $c_1 \ge 0$ such that
%\begin{equation*}
$| \partial j(x, r)| \le c_0 + c_1 |r|$
for all $r \in \real$, a.e. $x\in \Gamma_3$,
%\end{equation*}

\smallskip

\noindent
\quad (d)
$j^0(x, r; b-r) \le 0$ for all $r \in \real$,
a.e. $x \in \Gamma_{3}$ with a constant $b \in \real$.

\medskip

%\noindent
%${\underline{(H_0)}}$: \quad
%$g \in L^2(\Omega)$, $g \le 0$ in $\Omega$,
%$q \in L^2(\Gamma_2)$, $g \ge 0$ on $\Gamma_2$,
%$\alpha > 0$.

\medskip

Note that the existence results for elliptic hemivariational inequalities can be found in several contributions, see~\cite{CLM,unified,ELAS,MOS,NP}.
In comparison to other works, the new hypothesis is $H(j)$(d).
Under this condition we will show both existence of solution
to problem \eqref{abstract}
and a convergence result when $\alpha \to \infty$.
We underline that, if the hypothesis $H(j)$(d) is replaced
by the relaxed monotonicity condition
(see Remark~\ref{RRCC} for details)
\begin{equation*}
j^0(x, r; s-r) + j^0(x,s; r-s) \le m_j \, |r-s|^2
\end{equation*}
for all $r$, $s \in \real$, a.e. $x\in\Gamma_3$
with $m_j \ge 0$,
and the following smallness condition
$$
m_a > \alpha \, m_j \| \gamma\|^2
$$
is assumed, then
%%where $\|\gamma\|$ denotes the norm of the trace operator,
problem (\ref{abstract}) is uniquely solvable,
see~\cite[Lemma~20]{ELAS} for the proof.
However, this smallness condition is not suitable in the study of problem \eqref{abstract} since for a sufficiently large value of
$\alpha$, it is not satisfied.

In the following result we apply a surjectivity result in~\cite[Proposition~3.61]{MOS} and partially follow arguments of~\cite[Lemma~20]{ELAS}.
For completeness we provide the proof.

\smallskip

\begin{Theorem}\label{existence}
If $H(j)$ holds, $f \in V_0^*$ and $\alpha > 0$,
then the hemivariational inequality \eqref{abstract}
%%problem (\ref{Pj0alfavariacional})
has a solution.
\end{Theorem}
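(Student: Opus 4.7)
The plan is to reformulate the hemivariational inequality \eqref{abstract} as a multivalued operator inclusion in $V_0^*$ and apply the surjectivity theorem for pseudomonotone operators from~\cite[Proposition~3.61]{MOS}.

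First, I would introduce three operators. Let $A \colon V_0 \to V_0^*$ be defined by $\langle Au, v\rangle = a(u,v)$; by \eqref{coercive}, $A$ is linear, continuous, symmetric, and strongly monotone. Next, consider the integral functional $J \colon L^2(\Gamma_3) \to \real$ given by $J(w) = \int_{\Gamma_3} j(x, w(x)) \, d\Gamma$. By $H(j)$(a)--(c), $J$ is well defined and locally Lipschitz, and by the Aubin--Clarke theorem any $\eta \in \partial J(w)$ is an $L^2(\Gamma_3)$-function satisfying $\eta(x) \in \partial j(x, w(x))$ for a.e. $x \in \Gamma_3$. Composing with the trace $\gamma\colon V_0 \to L^2(\Gamma_3)$, define the multivalued operator $T \colon V_0 \to 2^{V_0^*}$ by $Tu = Au + \alpha\, \gamma^{*} \partial J(\gamma u)$. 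The target inclusion is $f \in Tu$. If this is solved, then choosing $\eta \in \partial J(\gamma u)$ with $Au + \alpha \gamma^{*}\eta = f$ and using $\eta(x) v(x) \le j^0(x, u(x); v(x))$ (from the pointwise definition of the Clarke subdifferential) yields \eqref{abstract} after integration over $\Gamma_3$.

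Next, I would verify the hypotheses of the surjectivity theorem: bounded, pseudomonotone, coercive multivalued operator with nonempty, convex, weakly compact values. Nonempty convex weakly compact values follow from Proposition~\ref{PROP1}(iv); boundedness follows from continuity of $A$, continuity of the trace, and the growth condition $H(j)$(c). For pseudomonotonicity, $A$ is monotone continuous and linear, hence pseudomonotone. The multivalued part $\gamma^{*}\partial J\gamma$ is pseudomonotone because $\gamma$ is compact (by the Rellich--Kondrachov theorem), so any weakly convergent sequence $u_n \rightharpoonup u$ in $V_0$ yields $\gamma u_n \to \gamma u$ in $L^2(\Gamma_3)$; combined with upper semicontinuity of $j^{0}(\cdot;\cdot)$ (Proposition~\ref{PROP1}(iii)) and the graph closedness property (Proposition~\ref{PROP1}(v)) this delivers the standard pseudomonotonicity inequality. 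The sum $T$ of two pseudomonotone operators is pseudomonotone.

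The decisive step is coercivity, and this is where the new assumption $H(j)$(d) enters. For any $\eta \in \partial J(\gamma u)$ we have, for a.e. $x \in \Gamma_3$,
$$
\eta(x)\,(b - u(x)) \le j^0(x, u(x); b - u(x)) \le 0,
$$
so $\eta(x)\, u(x) \ge \eta(x)\, b$. Integrating and combining with the growth bound $|\eta(x)| \le c_0 + c_1 |u(x)|$ from $H(j)$(c) gives
$$
\alpha \int_{\Gamma_3} \eta\, u \, d\Gamma \ge \alpha\, b \int_{\Gamma_3} \eta \, d\Gamma \ge - \alpha |b| \int_{\Gamma_3} (c_0 + c_1 |u|)\, d\Gamma \ge -C_1 - C_2 \|u\|_{V_0},
$$
using continuity of the trace. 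Together with $a(u,u) = \|u\|_{V_0}^2$ and $|\langle f, u\rangle| \le \|f\|_{V_0^*}\|u\|_{V_0}$, we obtain
$$
\inf_{\xi \in Tu} \langle \xi - f, u\rangle \;\ge\; \|u\|_{V_0}^2 - (C_2 + \|f\|_{V_0^{*}})\,\|u\|_{V_0} - C_1,
$$
which tends to $+\infty$ as $\|u\|_{V_0} \to \infty$, proving coercivity with no smallness condition on $\alpha$.

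The main obstacle I anticipate is the pseudomonotonicity of the multivalued part: one must pass to the limit in $\langle \eta_n, \gamma u_n\rangle_{L^2(\Gamma_3)}$ along a sequence $u_n \rightharpoonup u$ with $\eta_n \in \partial J(\gamma u_n)$. The compactness of $\gamma$ together with the $L^2$-boundedness of $\{\eta_n\}$ (from $H(j)$(c)) are exactly what is needed to extract a weakly convergent subsequence of $\eta_n$ and identify the limit in $\partial J(\gamma u)$ via Proposition~\ref{PROP1}(v). Once these ingredients are assembled, \cite[Proposition~3.61]{MOS} applies and produces the required solution, completing the proof.
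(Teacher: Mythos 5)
Your proposal is correct and follows essentially the same route as the paper: the same decomposition into $A$ plus the multivalued term $\alpha\,\gamma^{*}\partial J(\gamma\,\cdot)$ built from the integral functional $J$, the same verification of boundedness and (generalized) pseudomonotonicity via compactness of the trace and closedness of the graph of $\partial J$, the same use of $H(j)$(d) to get the coercivity bound $\langle v^{*},v\rangle \ge -d_{0}-d_{1}\|v\|_{V_{0}}$ without any smallness restriction on $\alpha$, and the same appeal to the surjectivity result of~\cite[Proposition~3.61]{MOS}. The only cosmetic difference is that you exploit the pointwise Aubin--Clarke characterization $\eta(x)\in\partial j(x,u(x))$ both for coercivity and for passing from the inclusion back to~\eqref{abstract}, whereas the paper works with the integrated inequality (p2) together with the subadditivity estimate $j^{0}(x,r;-r)\le j^{0}(x,r;b-r)+j^{0}(x,r;-b)$; both are equivalent in substance.
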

\begin{proof}
%We apply a surjectivity result %in~\cite[Proposition~3.61]{MOS}
%and partially follow the lines of the proof %of~\cite[Lemma~20]{ELAS}.	
Let $\langle \cdot, \cdot \rangle$ stand for the duality pairing between $V^*_0$ and $V_0$.
Let $A \colon V_0 \to V_0^*$ be defined by
$$
\langle Au, v \rangle = a(u, v)
\ \ \mbox{for} \ \ u, v \in V_0.
$$
It is obvious that the operator $A$ is linear, bounded and coercive, i.e.,
$\langle Av, v \rangle \ge \| v \|^2_{V_0}$
for all $v \in V_0$.
%We define functional $L\in V_0^*$ by
%\begin{equation*}
%L (v) = \int_{\Omega} g v \, dx
%- \int_{\Gamma_{2}} q v \, d\Gamma
%\ \ \mbox{\rm for} \ \ v \in V_0.
%\end{equation*}
Moreover, let
$J \colon L^2(\Gamma_3) \to \real$ be given
by
\begin{equation*}
J(w) = \int_{\Gamma_3} j(x, w(x)) \, d\Gamma
\ \ \mbox{for all} \ \ w \in L^2(\Gamma_3).
\end{equation*}

\noindent
From $H(j)$(a)-(c), by~\cite[Corollary~4.15]{MOS}, we infer that the functional $J$ enjoys the following pro\-per\-ties:
\begin{itemize}
\item[(p1)]
$J$ is well defined and Lipschitz continuous on bounded subsets of $L^2(\Gamma_3)$, hence also locally Lipschitz,
\item[(p2)]
$\displaystyle
J^0(w; z) \le \int_{\Gamma_3} j^0(x, w(x); z(x))\, d\Gamma$
for all $w$, $z \in L^2(\Gamma_3)$,
\item[(p3)]
$\| \partial J(w) \|_{L^2(\Gamma_3)} \le
{\bar{c}}_0 + {\bar{c}}_1 \, \| w \|_{L^2(\Gamma_3)}$
for all $w \in L^2(\Gamma_3)$ with ${\bar{c}}_0$,  ${\bar{c}}_1 \ge 0$.
\end{itemize}

We introduce the operator $B \colon V_0 \to 2^{V_0^*}$
defined
by
$$
Bv = \alpha\, \gamma^* \partial J (\gamma v)
\ \ \mbox{for all} \ \ v \in V_0,
$$
where $\gamma^* \colon L^2(\Gamma) \to V_0^*$
denotes the adjoint to the trace $\gamma$.

We show that $B$ is pseudomonotone and bounded
from $V_0$ to $2^{V_0^*}$, see~\cite[Definition~3.57]{MOS}.
By~Proposition~\ref{PROP1}\,(iv), it follows that
the values of $\partial J$ are nonempty, convex and weakly compact subsets of $L^2(\Gamma_3)$.
Hence, the set $Bv$ is nonempty, closed and convex in $V_0^*$ for all $v \in V_0$.
The operator $B$ is bounded which is a consequence of the following estimate
\begin{equation*}\label{ESTB}
%&& \hspace{-0.5cm}
\| Bv\|_{V_0^*}
\le \alpha \, \| \gamma^*\| \, \|
\partial J (\gamma v) \|_{L^2(\Gamma_3)}
\le \alpha \, \| \gamma^*\|
\, ({\bar{c}}_0 + {\bar{c}}_1 \| \gamma \| \| v \|_{V_0} )
\ \ \mbox{for all} \ \ v \in V_0,
%\\ [2mm]
%&& \qquad \qquad \le {\bar{c}}_0 \| \gamma \| + {\bar{c}}_1
%\| \gamma \|^2 \| v \|_V \nonumber
\end{equation*}
where $\|\gamma\|$ denotes the norm of the trace operator.
In order to establish pseudomonotonicity of the operator
$B$, we take into account~\cite[Proposition~3.58(ii)]{MOS},
and prove that $B$ is generalized pseudomonotone.

Let $v_n$, $v \in V_0$, $v_n \to v$ weakly in $V_0$, $v^*_n$, $v^* \in V_0^*$, $v^*_n \to v^*$ weakly in $V_0^*$,
$v_n^* \in Bv_n$ and\
$\limsup\, \langle v_n^*, v_n - v \rangle \le 0$.
We show that
$$
v^* \in Bv \ \ \ \mbox{and} \ \ \
\langle v_n^*, v_n \rangle \to \langle v^*, v \rangle.
$$
From condition $v_n^* \in Bv_n$, it follows
$v_n^* = \alpha\, \gamma^* \eta_n$ with
$\eta_n \in \partial J(\gamma v_n)$.
By the estimate (p3), it is clear that $\{ \eta_n \}$ remains in a bounded subset of $L^2(\Gamma_3)$.
Thus, at least for a subsequence, denoted in the same way,
we may suppose that $\eta_n \to \eta$
weakly in $L^2(\Gamma_3)$ with $\eta \in L^2(\Gamma_3)$.
Using the compactness of the trace operator, we have
$\gamma v_n \to \gamma v$ in $L^2(\Gamma_3)$
Now, we employ the strong-weak closedness of the graph of $\partial J$,
see~Proposition~\ref{PROP1}\,(v), to obtain
$\eta \in \partial J(\gamma v)$.
On the other hand,
by $v_n^* = \alpha\, \gamma^* \eta_n$,
it follows $v^* = \alpha\, \gamma^* \eta$.
Hence, we get
$v^* \in \alpha\, \gamma^* \partial J(\gamma v) = Bv$.
Now, it is obvious that
\begin{equation*}
	\langle v_n^*, v_n \rangle =
	\alpha
	\langle \eta_n, \gamma v_n \rangle_{L^2(\Gamma_3)}
	\longrightarrow \alpha
	\langle \eta, \gamma v \rangle_{L^2(\Gamma_3)} =
	\langle \alpha \gamma^* \eta, v \rangle =
	\langle v^*, v \rangle .
\end{equation*}

\noindent
This completes the proof that $B$ is generalized pseudomonotone. Hence, the operator~$B$ is also pseudomonotone.

Subsequently, we note that $A \colon V_0 \to V_0^*$ is pseudomonotone, see~\cite[Theorem~3.69]{MOS}, since it is linear, bounded and nonnegative. Therefore, $A$ is
pseudomonotone and bounded as a multivalued operator
from $V_0$ to $2^{V_0^*}$, see~\cite[Section~3.4]{MOS}.
Since the sum of multivalued pseudomonotone operators
remains pseudomonotone, see~\cite[Proposition~3.59\,(ii)]{MOS},
we infer that $A+B$ is bounded and pseudomonotone.

Next, we prove that the operator $A+B$ is coercive.
In view of the coercivity of $A$, it is enough to show that
\begin{equation}\label{STAR}
\langle Bv, v \rangle \ge -d_0 - d_1 \| v\|_{V_0}
\ \ \mbox{for all} \ \ v \in V_0
\end{equation}
with $d_0$, $d_1 \ge 0$.
First, from hypothesis $H(j)$(d),
by Proposition~\ref{PROP1}\,(i)-(ii),
we have
\begin{eqnarray*}
&&
j^0(x, r; -r) = j^0(x, r; b-r-b) \le
j^0(x, r;b-r) + j^0(x, r; -b) \\ [2mm]
&&\quad
\le j^0(x, r; -b) \le |\partial j(x, r)| \, |-b|
\le |b| (c_0 + c_1 |r|)
\end{eqnarray*}
for all $r \in \real$, a.e. $x \in \Gamma_3$.
Next, let $v \in V_0$, $v^* \in Bv$. Thus,
$v^* = \alpha\, \gamma^* \eta$ with
$\eta \in \partial J(\gamma v)$.
Hence, by the definition of the generalized gradient and
the property (p2), we obtain
\begin{eqnarray*}
&&
\alpha \, \langle \eta, -\gamma v \rangle_{L^2(\Gamma_3)}
\le
\alpha \, J^0(\gamma v; - \gamma v) \le \alpha \int_{\Gamma_{3}} j^0(\gamma v; -\gamma v) \, d\Gamma \\
&&\quad
\le \alpha \, |b| \int_{\Gamma_{3}}
(c_0 + c_1 |\gamma v (x)|)\, d\Gamma  \le
d_0 + d_1 \| v \|_{V_0}
\end{eqnarray*}
with $d_0$, $d_1 \ge 0$.
Using the latter and the equality
$$
\alpha \, \langle \eta, \gamma v \rangle_{L^2(\Gamma_3)}
= \langle \alpha \gamma^* \eta, v \rangle
= \langle v^*, v \rangle,
$$
we deduce
\begin{equation*}
\langle v^*, v \rangle \ge -d_0 - d_1 \| v\|_{V_0}
\ \ \mbox{for all} \ \ v \in V_0
\end{equation*}
which proves (\ref{STAR}). In consequence, we have
$$
\langle (A+B)v, v \rangle \ge \| v \|_{V_0}^2
- d_1 \| v\|_{V_0} - d_0.
$$
We conclude that the multivalued operator $A+B$ is bounded,
pseudomonotone, and coercive,
hence surjective, see~\cite[Proposition~3.61]{MOS}.
We infer that there exists $u \in V_0$ such that
$(A+B) u \ni f$.

In the final step of the proof,
we observe that any solution $u\in V_0$
to the inclusion $(A+B) u \ni f$
is a solution to problem (\ref{abstract}).
Indeed, we have
$$
A u + \alpha\, \gamma^* \eta = f \ \ \mbox{with} \ \
\eta \in \partial J(\gamma u)
$$
and hence
$$
\langle Au, v \rangle +
\alpha \langle \eta, \gamma v \rangle_{L^2(\Gamma_{3})}
= \langle f, v \rangle
$$
for all $v \in V_0$.
Combining the latter with the definition of the generalized gradient and the property (p2), we obtain
\begin{eqnarray*}
&&
\langle f, v \rangle
= \langle Au, v \rangle +
\alpha \, \langle \eta, \gamma v \rangle_{L^2(\Gamma_{3})}
\le
\langle Au, v \rangle + \alpha \, J^0(\gamma u; \gamma v)
\\
&&
\quad
\le
a(u, v) + \alpha \int_{\Gamma_{3}} j^0(\gamma u; \gamma v)
\, d\Gamma
\end{eqnarray*}
for all $v \in V_0$.
This means that $u\in V_0$ solves problem (\ref{abstract}).
This completes the proof.
\end{proof}

\section{Comparison results}\label{Comparison}

In this section we study the following two problems under the standing hypothesis $H(j)$ on the superpotential.

For every $\alpha > 0$, we consider
the hemivariational inequality of the form
\begin{equation}\label{P1}
\mbox{find} \ \ u \in V_0 \ \ \mbox{such that} \ \
a(u, v) + \alpha \int_{\Gamma_{3}}j^{0}(u; v)\, d\Gamma
\geq L(v) \ \ \mbox{\rm for all} \ \  v\in V_{0}
\end{equation}
and the weak form of the elliptic equation
\begin{equation}\label{P2}
\mbox{find} \ \ u_{\infty}\in K \ \ \mbox{such that}\ \
a(u_{\infty},v)=L(v)
\ \ \mbox{for all} \ \ v\in K_{0}.
\end{equation}
Recall that
\begin{equation*}
K=\{v\in V \mid
v = 0 \ \ \mbox{on} \ \ \Gamma_{1},\
v = b \ \ \mbox{on} \ \ \Gamma_{3} \},
\quad
K_{0}=\{v\in V \mid
v = 0 \ \ \mbox{on} \ \ \Gamma_{1}\cup \Gamma_3 \},
\end{equation*}

It follows from Theorem~\ref{existence} that
for each $\alpha > 0$, problem (\ref{P1}) has a solution
$u_\alpha \in V_0$
while~\cite[Corollary 2.102]{CLM} entails that problem~(\ref{P2})
has a unique solution $u_\infty \in K$.
Moreover, it is easy to observe that
problem~(\ref{P2}) can be equivalently
formulated as follows
\begin{equation}\label{P3}
\mbox{find} \ \ u_{\infty}\in K \ \ \mbox{such that}\ \
a(u_{\infty},v-u_\infty) =
L(v-u_\infty)
\ \ \mbox{for all} \ \ v\in K.
\end{equation}

In what follows we need the hypothesis on the data.

\medskip

\noindent
${\underline{(H_0)}}$: \quad
$g \in L^2(\Omega)$, $g \le 0$ in $\Omega$,
$q \in L^2(\Gamma_2)$, $q \ge 0$ on $\Gamma_2$.
%, $\alpha > 0$.

%\medskip
%
%\noindent
%${\underline{(H_1)}}$: \quad
%$j^0(x, r; b-r) \le 0$ for all $r \in \real$,
%a.e. on $\Gamma_{3}$.

%\medskip

\begin{Theorem}\label{Theorem5}
If $H(j)$, $(H_0)$ hold and $b\ge 0$, then
\begin{itemize}
\item [\rm (a)] $u_{\alpha}\leq b$ \ {\rm in} \ $\Omega$,
\item [\rm (b)] $u_{\alpha}\leq u_{\infty}$
\ {\rm in} \ $\Omega$,
\end{itemize}
where $u_\alpha \in V_0$ is a solution to problem~\eqref{P1} and $u_\infty \in K$ is the unique solution to problem~\eqref{P2}.
\end{Theorem}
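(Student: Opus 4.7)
The plan is to use the standard truncation technique: test the hemivariational inequality with $-w$, where $w$ is the positive part of the excess, and exploit hypothesis $H(j)(d)$ precisely on the set where the excess is positive.

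For part (a), set $w := (u_\alpha - b)^+$. Since $b \ge 0$ and $u_\alpha = 0$ on $\Gamma_1$, we have $w \in V_0$. Substituting $v = -w$ into \eqref{P1} and using linearity of $a$ and $L$ in the test slot yields
\[
a(u_\alpha, w) \le L(w) + \alpha \int_{\Gamma_3} j^0(u_\alpha; -w)\, d\Gamma.
\]
I then plan to handle the three terms as follows. First, $a(u_\alpha, w) = \|w\|_{V_0}^2$ because $\nabla w = \chi_{\{u_\alpha > b\}}\nabla u_\alpha$. Second, $L(w) \le 0$ by $(H_0)$, since $g \le 0$, $q \ge 0$ and $w \ge 0$ (with nonnegative trace). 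Third, the boundary integral is nonpositive because on $\Gamma_3 \cap \{u_\alpha > b\}$ the direction $-w = b - u_\alpha$ is exactly the direction controlled by $H(j)(d)$, while on $\Gamma_3 \cap \{u_\alpha \le b\}$ we have $w = 0$, so $j^0(u_\alpha; 0) = 0$ by positive homogeneity (Proposition~\ref{PROP1}(i)). Combining these gives $\|w\|_{V_0}^2 \le 0$, so $w \equiv 0$ and $u_\alpha \le b$.

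For part (b), set $w := (u_\alpha - u_\infty)^+$ and first verify $w \in K_0$. The trace of $w$ vanishes on $\Gamma_1$ because $u_\alpha$ and $u_\infty$ both do; on $\Gamma_3$, where $u_\infty = b$, part (a) combined with the Sobolev truncation identity $\gamma\bigl((u_\alpha - b)^+\bigr) = (\gamma u_\alpha - b)^+$ yields $\gamma u_\alpha \le b$ a.e.\ on $\Gamma_3$, so $\gamma w \equiv 0$ there. Testing \eqref{P2} with $w \in K_0$ gives $a(u_\infty, w) = L(w)$, while testing \eqref{P1} with $v = -w \in V_0$ gives $a(u_\alpha, w) \le L(w) + \alpha \int_{\Gamma_3} j^0(u_\alpha; -w)\, d\Gamma$. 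Since $\gamma w = 0$ on $\Gamma_3$, the boundary integral vanishes by positive homogeneity. Subtracting the two relations yields $a(u_\alpha - u_\infty, w) \le 0$, which equals $\|w\|_{V_0}^2$ by the same truncation computation as in part (a), so $w \equiv 0$ and $u_\alpha \le u_\infty$.

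The main technical point is the choice of $-w$ rather than $+w$ as test function: because $j^0(x,r;\cdot)$ is only positively homogeneous (not odd), the two directions are not interchangeable, and $-w$ is precisely the choice that routes the boundary argument through $b - u_\alpha$, the single direction for which $H(j)(d)$ supplies a sign. Everything else reduces to the classical truncation scheme; the only nonroutine ingredient is the Sobolev-truncation identity used in part (b), which follows from the chain rule for $H^1$ applied to the $1$-Lipschitz map $s \mapsto (s-b)^+$ (which sends $0$ to $0$ thanks to $b \ge 0$).
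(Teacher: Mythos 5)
Your proof is correct and takes essentially the same route as the paper's: both arguments test \eqref{P1} with the negative of the truncation $w^+$, use $(H_0)$ for the sign of $L(w^+)$, kill the $\Gamma_3$ term via the case analysis that turns $H(j)$(d) into the pointwise bound $j^0(x,r;-(r-b)^+)\le 0$ (the paper's inequality (\ref{H1prime})), and in part (b) use part (a) to verify $w^+\in K_0$ so that \eqref{P2} may be tested with it before concluding by coercivity. The only cosmetic difference is that you note the boundary integral in part (b) vanishes identically because $\gamma w = 0$ on $\Gamma_3$, whereas the paper simply bounds it above by zero.
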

\begin{proof}
(a) \ 	
Let $w = u_\alpha-b$. We shall prove that $w^+ = 0$,
where $r^+ = \max\{ 0, r\}$ for $r \in \real$.
Since $w\big|_{\Gamma _{1}}=-b \le 0$, we have
$w^+\big|_{\Gamma _{1}}=0$.
We choose $v = -w^+ \in V_0$ in problem~(\ref{P1})
to get
\begin{equation*}
a(u_\alpha, -w^+) + \alpha \int_{\Gamma_{3}}
j^0(u_\alpha; -w^+) \, d\Gamma \ge L(-w^+).
\end{equation*}
By the linearity of the form $a$, we easily obtain
\begin{equation*}
a(u_\alpha, -w^+) = - a(w^+, w^+),
\end{equation*}
while $(H_0)$ implies $L(w^+) \le 0$.
Hence
\begin{equation*}
-a(w^+, w^+) + \alpha \int_{\Gamma_{3}}
j^0(u_\alpha; -w^+) \, d\Gamma \ge L(-w^+) \ge 0,
\end{equation*}
and
\begin{equation*}
a(w^+, w^+) \le \alpha \int_{\Gamma_{3}}
j^0(u_\alpha; -(u_\alpha-b)^+) \, d\Gamma.
\end{equation*}
Subsequently, $H(j)$(d) entails
\begin{equation}\label{H1prime}
j^0(x, r; -(r-b)^+) \le 0 \ \ \mbox{for all}
\ \ r \in \real, \ \mbox{a.e.} \ x\in\Gamma_{3}.
\end{equation}
Indeed, if $r \le b$, then $(r-b)^+ =0$ and
$j^0(x, r; -(r-b)^+) = j^0(x, r; 0) = 0 \le 0$.
If $r > b$, we would have $(r-b)^+ = r-b$ and
$j^0(x, r; -(r-b)^+) = j^0(x, r; b-r) \le 0$.
Using the coercivity condition (\ref{coercive}) of the form $a$ and (\ref{H1prime}), we deduce
$m_a \| w^+ \|_V^2 \le 0$.
Hence $w^+  = 0$ in $\Omega$, and finally $u_\alpha \le b$ in $\Omega$.

\smallskip

(b) \
We denote $w = u_\alpha -u_\infty$. It is enough to
show that $w^+ = 0$ in $\Omega$.
We observe that $w \big|_{\Gamma_{1}}=0$.
This allows to choose $v = -w^+ \in V_0$
in problem (\ref{P1}) to obtain
\begin{equation*}
a(u_\alpha-u_\infty, -w^+) + a(u_\infty, -w^+)
+ \alpha \int_{\Gamma_{3}}
j^0(u_\alpha; -w^+) \, d\Gamma \ge L(-w^+).
\end{equation*}
Exploiting the relation
$a(u_\alpha - u_\infty, -w^+) = -a(w^+, w^+)$,
we have
\begin{equation}\label{N1}
-a(w^+, w^+) + a(u_\infty, -w^+)
+ \alpha \int_{\Gamma_{3}}
j^0(u_\alpha; -w^+) \, d\Gamma \ge L(-w^+).
\end{equation}
%
%, we have
%\begin{equation}\label{Q22}
%a(u_\infty, v) = L(v) \ \ \mbox{for all} \ \ v \in K_0.
%\end{equation}
Next, part (a) of the proof shows that
$$
w \big|_{\Gamma_{3}}
= (u_\alpha - b) \big|_{\Gamma_{3}} \le 0
$$
and $w^+ \big|_{\Gamma_{3}}=0$, and consequently
$w^+ \in K_0$.
Since $u_\infty \in K$ solves (\ref{P2}),
taking $v = w^+ \in K_0$ in equality (\ref{P2}),
and using the result in (\ref{N1}), it follows that
\begin{equation*}
a(w^+, w^+) \le \alpha \int_{\Gamma_{3}}
j^0(u_\alpha; -w^+) \, d\Gamma.
\end{equation*}
Since $u_\infty = b$ on $\Gamma_{3}$, by
(\ref{H1prime}), we get
\begin{equation*}
j^0(x, u_\alpha; -(u_\alpha-u_\infty)^+) =
j^0(x, u_\alpha; -(u_\alpha-b)^+) \le 0
\ \ \mbox{a.e. on} \ \Gamma_{3}.
\end{equation*}
Again, by the coercivity of the form $a$,
we have
$m_a \| w^+ \|_V^2 \le 0$.
Therefore, $w^+  = 0$ in $\Omega$, and finally $u_\alpha \le u_\infty$ in $\Omega$. This completes the proof.
\end{proof}

Note that properties (a) and (b) of Theorem~\ref{Theorem5}
obtained for the hemivariational inequality~\eqref{P1}
have been provided for linear elliptic problem (\ref{Pvariacional}) in properties (ii) and (iii) of Theorem~\ref{teor1}.

In what follows, we comment on the monotonicity property analogous to condition
(iv) stated for problem (\ref{Pvariacional}) in Theorem~\ref{teor1}.

\begin{Proposition}\label{PPP}
Assume that $H(j)$ and $(H_0)$ hold, and
\begin{equation}\label{HHH}
j^0(x, r; -(r-s)^+) + c \, j^0(x,s; (r-s)^+) \le 0
\end{equation}
for all $c \ge 1$,
all $r$, $s \in \real$,
a.e. $x\in\Gamma_3$.
Let $u_{\alpha_i} \in V_0$
denote the unique solution to the inequality \eqref{P1}
corresponding to $\alpha_i > 0$,
$i=1$, $2$.
Then the following monotonicity property holds:
\begin{equation*}
\alpha_1 \le \alpha_2
\ \ \Longrightarrow \ \
u_{\alpha_1} \le u_{\alpha_2}
\ \ \mbox{in} \ \ \Omega.
\end{equation*}
\end{Proposition}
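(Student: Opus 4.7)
\noindent
The strategy is to adapt the truncation argument already used in the proof of Theorem~\ref{Theorem5}(b), but now comparing two hemivariational inequalities (rather than one hemivariational inequality and one linear elliptic equation). I set $w = u_{\alpha_1} - u_{\alpha_2}$ and aim to prove $w^+ = 0$ in $\Omega$, where $r^+ = \max\{0,r\}$. Since both $u_{\alpha_1}$ and $u_{\alpha_2}$ belong to $V_0$ and hence vanish on $\Gamma_1$, the same is true for $w$, so $w^+ \in V_0$ is an admissible test function in problem~\eqref{P1} for each $\alpha_i$.

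\noindent
Next I would combine the two inequalities. Testing \eqref{P1} for $u_{\alpha_1}$ with $v = -w^+$ gives
\begin{equation*}
a(u_{\alpha_1}, -w^+) + \alpha_1 \int_{\Gamma_3} j^0(u_{\alpha_1}; -w^+)\, d\Gamma \ge L(-w^+),
\end{equation*}
and testing \eqref{P1} for $u_{\alpha_2}$ with $v = w^+$ gives
\begin{equation*}
a(u_{\alpha_2}, w^+) + \alpha_2 \int_{\Gamma_3} j^0(u_{\alpha_2}; w^+)\, d\Gamma \ge L(w^+).
\end{equation*}
Adding the two inequalities, the linear functional $L$ cancels, and by the bilinearity of $a$ the leading term becomes $-a(w^+, w^+)$ (using $a(w, -w^+) = -a(w^+, w^+)$, as in the proof of Theorem~\ref{Theorem5}). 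I therefore obtain
\begin{equation*}
a(w^+, w^+) \le \alpha_1 \int_{\Gamma_3} j^0(u_{\alpha_1}; -w^+)\, d\Gamma + \alpha_2 \int_{\Gamma_3} j^0(u_{\alpha_2}; w^+)\, d\Gamma.
\end{equation*}

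\noindent
The crux is to show that the right-hand side is nonpositive. At a point $x \in \Gamma_3$, set $r = u_{\alpha_1}(x)$ and $s = u_{\alpha_2}(x)$; then $(r-s)^+(x) = w^+(x)$. Since $\alpha_1 \le \alpha_2$ and both are strictly positive, the number $c = \alpha_2 / \alpha_1$ satisfies $c \ge 1$, so hypothesis \eqref{HHH} applies and yields
\begin{equation*}
j^0(x, r; -(r-s)^+) + \frac{\alpha_2}{\alpha_1}\, j^0(x, s; (r-s)^+) \le 0.
\end{equation*}
Multiplying by $\alpha_1 > 0$ and integrating over $\Gamma_3$ produces exactly the pointwise bound needed to conclude that the sum of the two boundary integrals above is $\le 0$. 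Hence $a(w^+, w^+) \le 0$, and the coercivity estimate \eqref{coercive} forces $\| w^+\|_V = 0$, i.e. $u_{\alpha_1} \le u_{\alpha_2}$ in $\Omega$.

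\noindent
I expect the only delicate point to be the choice of the scaling constant $c = \alpha_2/\alpha_1$ in \eqref{HHH}: the flexibility of the hypothesis over \emph{all} $c \ge 1$ is precisely what allows the two boundary integrals with different weights $\alpha_1$ and $\alpha_2$ to be combined into a single nonpositive quantity. Without this scaling freedom (e.g.\ if the assumption were stated only for $c = 1$), the argument would break, since after summing the two tested inequalities the coefficients of the $j^0$ terms are necessarily unequal. Everything else reduces to routine manipulations already encountered in the proof of Theorem~\ref{Theorem5}.
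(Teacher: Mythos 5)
Your proposal is correct and follows essentially the same route as the paper's own proof: the same test functions $v = -w^+$ and $v = w^+$ in \eqref{P1} for $\alpha_1$ and $\alpha_2$ respectively, the same identity $a(w,-w^+) = -a(w^+,w^+)$, the same pointwise application of \eqref{HHH} with $c = \alpha_2/\alpha_1 \ge 1$ after factoring out $\alpha_1$, and the same conclusion via coercivity. Your closing remark about why the hypothesis must hold for all $c \ge 1$ (not just $c=1$) correctly identifies the role of the scaling freedom, which the paper exploits in exactly the same way.
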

\begin{proof}
Let $0 < \alpha_1 \le \alpha_2$ and
$w = u_{\alpha_1} - u_{\alpha_2}$ in $\Omega$.
It is sufficient to prove that $w^+ = 0$ in $\Omega$.
Since $w \big|_{\Gamma_{1}} = 0$, we have $w^+ \in V_0$.
We
choose $v = -w^+ \in V_0$ in problem \eqref{P1}
for $\alpha_1$, and
$v = w^+ \in V_0$ in problem \eqref{P1}
for $\alpha_2$ to get
\begin{eqnarray*}
&&
a(u_{\alpha_1}, -w^+) + \alpha_1 \int_{\Gamma_{3}}
j^{0}(u_{\alpha_1}; -w^+)\, d\Gamma \geq L(-w^+), \\
&&
a(u_{\alpha_2}, w^+) + \alpha_2 \int_{\Gamma_{3}}
j^{0}(u_{\alpha_2}; w^+)\, d\Gamma \geq L(w^+).
\end{eqnarray*}
By adding the last two inequalities, we have
\begin{equation*}
-a(w, w^+) + \alpha_1 \int_{\Gamma_{3}}
j^{0}(u_{\alpha_1}; -w^+)\, d\Gamma
+ \alpha_2 \int_{\Gamma_{3}}
j^{0}(u_{\alpha_2}; w^+)\, d\Gamma \geq 0
\end{equation*}
which implies
\begin{eqnarray*}
&&
a(w^+, w^+) \le \int_{\Gamma_{3}}
\Big( \alpha_1 \, j^{0}(u_{\alpha_1}; -w^+)
+ \alpha_2 \, j^{0}(u_{\alpha_2}; w^+) \Big) \, d\Gamma \\
&&\quad
=
\alpha_1 \int_{\Gamma_{3}}
\Big( j^{0}(u_{\alpha_1}; -w^+)
+ \frac{\alpha_2}{\alpha_1} \, j^{0}(u_{\alpha_2}; w^+) \Big)
\, d\Gamma \le 0.
\end{eqnarray*}
Using the coercivity of the form $a$, we deduce that $w^+ = 0$,
which completes the proof.
\end{proof}

Note that hypothesis (\ref{HHH}) implies that the function
$j(x, \cdot)$ is convex for a.e. $x \in \Gamma_3$.
In fact, if $r>s$, then $(r-s)^+=r-s$ and (\ref{HHH}) gives
$$
j^0(x, r; s-r) + c \, j^0(x,s; r-s) \le 0 \ \ \mbox{for \ all}\ \ c\geq 1.
$$
In particular, taking $c=1$ we obtain the condition equivalent to the relaxed monotonicity condition 
with $m_j=0$, which means that $j(x,\cdot)$ is convex (see Remark~\ref{RRCC}).
  
We conclude that the monotonicity property of Proposition~\ref{PPP} holds for convex potentials,
i.e., for variational inequalities.
The proof of the monotonicity property for hemivariational inequalities remains an open problem.

\section{Asymptotic behavior of solutions}\label{Asymptotic}
%%as $\alpha \to \infty$}

In this section we investigate the asymptotic behavior of solutions to problem~(\ref{P1}) when $\alpha \rightarrow\infty$.
To this end, we need the following additional hypothesis
on the superpotential~$j$.

\medskip

\noindent
${\underline{(H_1)}}$: \quad
if $j^0(x, r; b-r) = 0$ for all $r \in \real$,
a.e. $x \in \Gamma_{3}$, then $r = b$.

\begin{Theorem}\label{Theorem6}
Assume $H(j)$, $(H_0)$ and
$(H_1)$.
Let $\{ u_\alpha \} \subset V_0$
be a sequence of solutions to problem~\eqref{P1}
and $u_\infty \in K$ be the unique solution to problem~\eqref{P2}.
Then $u_\alpha \to u_\infty$ in $V$, as $\alpha \to \infty$.
\end{Theorem}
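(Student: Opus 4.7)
The plan is to derive a uniform $V_0$-bound on $\{u_\alpha\}$, extract a weakly convergent subsequence whose limit lies in $K$ and solves~\eqref{P2}, and then upgrade to strong convergence in $V$; uniqueness for~\eqref{P2} will propagate the limit to the whole family. First I would test~\eqref{P1} with $v = u_\infty - u_\alpha$, which is admissible since both $u_\infty$ and $u_\alpha$ vanish on $\Gamma_1$. On $\Gamma_3$ the trace of $v$ equals $b - u_\alpha$, so $H(j)(d)$ gives $j^0(u_\alpha; v) \le 0$ pointwise; the boundary integral is nonpositive and can be discarded, yielding
\begin{equation}\label{plan-key}
a(u_\alpha, u_\infty - u_\alpha) \ge L(u_\infty - u_\alpha).
\end{equation}
Combined with coercivity~\eqref{coercive} and continuity of $a$ and $L$, this delivers an $\alpha$-independent bound $\|u_\alpha\|_{V_0} \le C$. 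Along a subsequence, $u_\alpha \rightharpoonup u^{\star}$ weakly in $V_0$, and compactness of the trace gives $\gamma u_\alpha \to \gamma u^{\star}$ in $L^2(\Gamma_3)$ and, after a further subsequence, a.e.\ on $\Gamma_3$.

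The decisive step is to show $u^{\star} \in K$, i.e.\ $u^{\star} = b$ a.e.\ on $\Gamma_3$. Returning to the tested inequality before dropping the boundary term, dividing by $\alpha$, and using the uniform bound, I obtain
\begin{equation*}
0 \ge \int_{\Gamma_3} j^0(u_\alpha; b - u_\alpha)\, d\Gamma \ge \frac{1}{\alpha}\bigl( L(u_\infty - u_\alpha) - a(u_\alpha, u_\infty - u_\alpha) \bigr) \longrightarrow 0,
\end{equation*}
so $\int_{\Gamma_3}\bigl(-j^0(u_\alpha; b - u_\alpha)\bigr)\, d\Gamma \to 0$ with a nonnegative integrand. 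Fatou's lemma, coupled with the upper semicontinuity of $(r, v) \mapsto j^0(x, r; v)$ from Proposition~\ref{PROP1}(iii) and the sign condition $H(j)(d)$, then forces $j^0(x, u^{\star}; b - u^{\star}) = 0$ a.e.\ on $\Gamma_3$, whereupon $(H_1)$ yields $u^{\star} = b$ a.e.\ on $\Gamma_3$, so $u^{\star} \in K$.

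To identify $u^{\star}$ with $u_\infty$, I would test \eqref{P1} with an arbitrary $v \in K_0$. Since $v|_{\Gamma_3} = 0$, positive homogeneity of $j^0(x, r; \cdot)$ (Proposition~\ref{PROP1}(i)) gives $j^0(u_\alpha; v) = 0$, so \eqref{P1} reduces to $a(u_\alpha, v) \ge L(v)$; applying the same with $-v \in K_0$ produces the reverse inequality, hence $a(u_\alpha, v) = L(v)$. Passing to the weak limit gives $a(u^{\star}, v) = L(v)$ for every $v \in K_0$; together with $u^{\star} \in K$, uniqueness of the solution to~\eqref{P2} forces $u^{\star} = u_\infty$, and a standard subsequence-uniqueness argument extends the convergence to the whole family $\{u_\alpha\}$. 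Strong convergence then follows by expanding
\begin{equation*}
a(u_\infty - u_\alpha, u_\infty - u_\alpha) = a(u_\infty, u_\infty - u_\alpha) - a(u_\alpha, u_\infty - u_\alpha) \le a(u_\infty, u_\infty - u_\alpha) - L(u_\infty - u_\alpha),
\end{equation*}
where the last inequality uses~\eqref{plan-key}; both terms on the right vanish under weak convergence (the $L$-term relies on compactness of the trace on $\Gamma_2$), and coercivity~\eqref{coercive} then delivers $\|u_\alpha - u_\infty\|_V \to 0$.

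The main obstacle is expected to be the pointwise identification $u^{\star} = b$ on $\Gamma_3$: it requires combining the quantitative vanishing $\int_{\Gamma_3} j^0(u_\alpha; b - u_\alpha)\, d\Gamma \to 0$ with a Fatou-type argument in the correct direction, for which both the sign information from $H(j)(d)$ and the joint upper semicontinuity of $j^0(x, \cdot; \cdot)$ are essential. Once this is secured, the remaining steps amount to routine manipulation of the variational structure and the admissible test spaces $V_0$ and $K_0$.
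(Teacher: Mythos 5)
Your proof is correct, and its skeleton coincides with the paper's: test \eqref{P1} with $v=u_\infty-u_\alpha$ and use $H(j)$(d) to kill the boundary term for the uniform bound; keep the boundary term and divide by $\alpha$ to get $-\int_{\Gamma_3}j^0(u_\alpha;b-u_\alpha)\,d\Gamma\le C/\alpha$; extract a weak limit, identify its trace as $b$ via a.e.\ convergence, upper semicontinuity of $j^0$ and $(H_1)$; identify the limit with $u_\infty$ by uniqueness; and close with coercivity for strong convergence. You do, however, deviate at two technical points, both to your advantage. First, for the boundary identification you invoke Fatou's lemma for the nonnegative integrand $-j^0(u_\alpha;b-u_\alpha)$, whereas the paper invokes a dominated-convergence argument, constructing the $L^1(\Gamma_3)$ dominating function $k(x)=(c_0+c_1h(x))(|b|+h(x))$ from the growth condition $H(j)$(c) and the $L^2$ majorant $h$ of the a.e.\ convergent subsequence of traces; your route needs only the sign condition and pointwise upper semicontinuity, so it is leaner (it does not use $H(j)$(c) at this step), though both variants still require the subsequence with a.e.\ convergent traces. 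Second, to identify the limit you exploit that $K_0$ is a linear subspace: since $v\big|_{\Gamma_3}=0$ makes $j^0(u_\alpha;v)=0$ by positive homogeneity, testing with $\pm v\in K_0$ yields the \emph{equality} $a(u_\alpha,v)=L(v)$ already at the level of $u_\alpha$, and this passes to the weak limit linearly; the paper instead first derives the variational inequality $L(w-u^*)\le a(u^*,w-u^*)$ over all $w\in K$ using weak lower semicontinuity of $v\mapsto a(v,v)$, and only then linearizes over $K_0$. Your shortcut removes the lower-semicontinuity step entirely. One small inaccuracy: in the strong-convergence step, the claim that the vanishing of $L(u_\infty-u_\alpha)$ \emph{relies on} compactness of the trace on $\Gamma_2$ is not needed --- $L\in V^*$ is a bounded linear functional, so weak convergence $u_\alpha\rightharpoonup u_\infty$ in $V$ already gives $L(u_\infty-u_\alpha)\to 0$; this is harmless since the conclusion stands either way.
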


\begin{proof}
First, we prove the estimate on the sequence
$\{ u_\alpha \}$ in $V$.
We choose $v = u_\infty - u_\alpha \in V_0$
as a test function in problem~(\ref{P1}) to obtain
\begin{equation*}
a(u_\alpha, u_\infty -u_\alpha)
+ \alpha \int_{\Gamma_{3}}
j^0(u_\alpha; u_\infty -u_\alpha) \, d\Gamma
\ge L(u_\infty -u_\alpha).
\end{equation*}
From the equality
$a(u_\alpha, u_\infty -u_\alpha) =
- a(u_\infty -u_\alpha, u_\infty -u_\alpha)
+ a(u_\infty, u_\infty -u_\alpha)$,
we get
\begin{equation}\label{N2}
a(v,v) - \alpha \int_{\Gamma_{3}}
j^0(u_\alpha; v) \, d\Gamma \le a(u_\infty, v) - L(v).
\end{equation}
We observe that
$j^0(x, u_\alpha; v) = j^0(x, u_\alpha; b-u_\alpha)$
on $\Gamma_{3}$, and by $H(j)$(d), we have
$j^0(x, u_\alpha; v) \le 0$ on $\Gamma_{3}$.
Hence
\begin{equation*}
a(v,v) \le a(u_\infty, v) - L(v).
\end{equation*}
By the boundedness and coercivity of $a$,
we infer
\begin{equation*}
m_a \| v \|_V^2 \le (M \|u_\infty\|_V
+ \| L \|_{V^*}) \, \| v \|_V
\end{equation*}
with $M > 0$,
and subsequently
\begin{equation}\label{ZZZ}
\| u_\alpha \|_V \le \| v \|_V + \| u_\infty \|_V
\le \frac{1}{m_a}
(M \|u_\infty\|_V  + \| L \|_{V^*}) +
\| u_\infty \|_V =: C,
\end{equation}
where $C > 0$ is independent of $\alpha$.
Hence, since $a(v,v) \ge 0$, from (\ref{N2}),
we have
\begin{equation*}
- \alpha \int_{\Gamma_{3}}
j^0(u_\alpha; v) \, d\Gamma \le
(M \|u_\infty\|_V  + \| L \|_{V^*}) \, \| v \|_V
\le 
\frac{1}{m_a} (M \|u_\infty\|_V  + \| L \|_{V^*})^2
=: C_1,
\end{equation*}
where $C_1 > 0$ is independent of $\alpha$.
Thus
\begin{equation}\label{N3}
- \int_{\Gamma_{3}}
j^0(u_\alpha; v) \, d\Gamma \le
\frac{C_1}{\alpha}.
\end{equation}
It follows from (\ref{ZZZ}) that
$\{ u_\alpha \}$ remains in a bounded subset
of $V$. Thus,
there exists $u^* \in V$ such that,
by passing to a subsequence if necessary, we have
\begin{equation}\label{CONV8}
u_\alpha \to u^* \ \ \mbox{weakly in} \ \ V, \ \mbox{as} \
\alpha \to \infty.
\end{equation}

Next, we will show that $u^* = u_\infty$.
We observe that $u^* \in V_0$ because
$\{ u_\alpha \} \subset V_0$ and $V_0$ is sequentially weakly closed in $V$.
Let $w \in K$ and $v = w - u_\alpha \in V_0$.
From (\ref{P1}), we have
\begin{equation*}
L(w-u_\alpha) \le a(u_\alpha, w -u_\alpha)
+ \alpha \int_{\Gamma_{3}}
j^0(u_\alpha; w-u_\alpha) \, d\Gamma.
\end{equation*}
Since $w = b$ on $\Gamma_3$, by $H(j)$(d), we have
\begin{equation*}
\alpha \int_{\Gamma_{3}} j^0(u_\alpha; w-u_\alpha) \, d\Gamma
= \alpha \int_{\Gamma_{3}} j^0(u_\alpha; b-u_\alpha) \, d\Gamma \le 0
\end{equation*}
which implies
\begin{equation}\label{INEQ2}
L(w-u_\alpha) \le a(u_\alpha, w -u_\alpha).
\end{equation}
Next, we use the weak lower semicontinuity of the functional
$V \ni v \mapsto a(v,v) \in \real$ and from (\ref{INEQ2}),
we deduce
\begin{equation}\label{INEQ4}
u^* \in V_0 \ \ \mbox{satisfies} \ \
L(w-u^*) \le a(u^*, w-u^*) \ \ \mbox{for all} \ \ w \in K.
\end{equation}

Subsequently, we will show that $u^* \in K$.
In fact, from (\ref{CONV8}), by the compactness of the trace operator, we have
$u_\alpha \big|_{\Gamma_{3}} \to
u^* \big|_{\Gamma_{3}}$ in $L^2(\Gamma_3)$, as
$\alpha \to \infty$. Passing to a subsequence if necessary,
we may suppose that
$u_\alpha (x) \to u^*(x)$ for a.e. $x \in \Gamma_3$
and there exists $h \in L^2(\Gamma_3)$ such that
$|u_\alpha(x)| \le h(x)$ a.e. $x \in \Gamma_3$.
Using the upper semicontinuity of the function
$\real\times \real \ni (r, s) \mapsto j^0(x, r; s)
\in \real$ for a.e. $x \in \Gamma_3$,
see~Proposition~\ref{PROP1}(iii),
%%\cite[Proposition~3.23(ii)]{MOS},
we get
$$
\limsup j^0(x, u_\alpha(x); u_\infty(x) - u_\alpha(x))
\le j^0(x,u^*(x); u_\infty(x)-u^*(x))
\ \ \mbox{a.e.} \ \ x \in \Gamma_3.
$$
Next, taking into account the estimate
\begin{equation*}
|j^0(x, u_\alpha(x); u_\infty(x) - u_\alpha(x))|
%= |j^0(u_\alpha(x); b - u_\alpha(x))|
\le (c_0 + c_1 |u_\alpha (x)|) \, |b-u_\alpha(x)| \le k(x)
\ \ \mbox{a.e.} \ \ x \in \Gamma_3
\end{equation*}
with $k \in L^1(\Gamma_3)$ given by
$k(x) = (c_0 + c_1 h(x)) (|b| + h(x))$,
by the dominated convergence theorem, see~\cite[Theorem~2.2.33]{DMP1}, we
obtain
$$
\limsup \int_{\Gamma_3} j^0(u_\alpha; u_\infty - u_\alpha)
\, d\Gamma
\le \int_{\Gamma_3} j^0(u^*; u_\infty-u^*)\, d\Gamma.
$$
Consequently, from $H(j)$(d) and (\ref{N3}), we have
\begin{equation*}
0 \le -\int_{\Gamma_{3}} j^0(u^*; b-u^*) \, d\Gamma
\le \liminf \left(
-\int_{\Gamma_{3}} j^0(u_\alpha; u_\infty-u_\alpha)
\, d\Gamma \right) \le 0
\end{equation*}
which gives
$\int_{\Gamma_{3}} j^0(u^*; b-u^*) \, d\Gamma =0$.
Again by $H(j)$(d), we get $j^0(x, u^*; b-u^*) = 0$
a.e. $x\in\Gamma_{3}$.
Using $(H_1)$, we have $u^*(x) = b$ for a.e.
$x \in \Gamma_3$, which together with (\ref{INEQ4}) implies
\begin{equation*}\label{INEQ5}
u^* \in K \ \ \mbox{satisfies} \ \
L(w-u^*) \le a(u^*, w-u^*) \ \ \mbox{for all} \ \ w \in K.
\end{equation*}

Next, we prove that $u^* = u_\infty$.
To this end, let $v := w -u^* \in K_0$ with
arbitrary $w \in K$.
Hence, $L(v) \le a(u^*, v)$ for all $v \in K_0$.
Recalling that $v \in K_0$ implies $-v \in K_0$,
we obtain
$a(u^*, v) \le L(v)$ for all $v \in K_0$.
Hence, we conclude that
$$
u^* \in K \ \ \mbox{satisfies} \ \
a(u^*, v) = L(v)\ \ \mbox{for all} \ \ v \in K_0,
$$
i.e.,
$u^* \in K$ is a solution to problem (\ref{P2}).
By the uniqueness of solution to problem (\ref{P2}),
we have $u^* = u_\infty$ and hence
$u_\alpha \to u_\infty$ weakly in $V$,
as $\alpha \to \infty$.
From the uniqueness of solution to (\ref{P2}),
we also infer that the whole sequence $\{ u_\alpha \}$ converges weakly in $V$ to $u_\infty$.

Finally, we prove the strong convergence
$u_\alpha \to u_\infty$ in $V$, as $\alpha \to \infty$.
Choosing
$v = u_\infty-u_\alpha \in V_0$ in problem (\ref{P1}),
we obtain
\begin{equation*}
a(u_\alpha, u_\infty -u_\alpha)
+ \alpha \int_{\Gamma_{3}}
j^0(u_\alpha; u_\infty-u_\alpha) \, d\Gamma
\ge L(u_\infty-u_\alpha).
\end{equation*}
Hence
\begin{equation*}
a(u_\infty -u_\alpha, u_\infty -u_\alpha)
\le a(u_\infty, u_\infty -u_\alpha)
+ L(u_\alpha - u_\infty)
+ \alpha \int_{\Gamma_{3}}
j^0(u_\alpha; u_\infty-u_\alpha) \, d\Gamma.
\end{equation*}
Since $u_\infty = b$ on $\Gamma_3$,
by $H(j)$(d) and the coercivity of the form $a$,
we have
$$
m_a \, \| u_\infty -u_\alpha \|^2_V \le
a(u_\infty, u_\infty -u_\alpha)
+ L(u_\alpha - u_\infty).
$$
Employing the weak continuity of both $a(u_\infty, \cdot)$
and $L$, we conclude that
$u_\alpha \to u_\infty$ in $V$, as $\alpha \to \infty$.
This completes the proof.
\end{proof}

\section{Continuous dependence result}\label{Continuous}

In this section we provide the result on continuous dependence
of solution to problem (\ref{P1}) on the internal energy
$g$ and the heat flux $q$ for fixed $\alpha >0$.

%Let $\alpha > 0$ be fixed.
First, from the compactness
of the embedding $V$ into $L^2(\Omega)$ and
of the trace operator from $V$ into $L^2(\Gamma)$,
we obtain the following convergence result.
\begin{Lemma}\label{Lemma1}
Let $g_n \in L^2(\Omega)$, $q_n \in L^2(\Gamma_2)$
for $n \in \nat$. Define $L_n \in V^*$, $n \in \nat$,
by
$$
L_n(v) = \int_{\Omega} g_n v \, dx
- \int_{\Gamma_{2}} q_n v \, d\Gamma
\ \ \mbox{\rm for} \ \ v \in V.
$$
If $g_n \to g$ weakly in $L^2(\Omega)$,
$q_n \to q$ weakly in $L^2(\Gamma_{2})$, and
$v_n \in V$, $v_n \to v$ weakly in $V$, then
$$L_n(v_n) \to L(v), \ \ \mbox{\rm as} \ \ n \to \infty,
$$
and there exists a constant $C> 0$ independent of $n$
such that $\| L_n \|_{V^*} \le C$ for all $n \in \nat$.
\end{Lemma}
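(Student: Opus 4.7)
The plan is to establish both assertions by the standard decomposition trick: split $L_n(v_n) - L(v)$ into pieces that separate the weak convergence of the data from the weak convergence of the test functions, and handle each piece with the appropriate compactness argument.

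\textbf{Step 1 (uniform boundedness).} Weak convergence of $\{g_n\}$ in $L^2(\Omega)$ and of $\{q_n\}$ in $L^2(\Gamma_2)$ implies, via the Banach--Steinhaus theorem, that these sequences are norm-bounded. Using Cauchy--Schwarz together with the continuity of the trace operator $\gamma\colon V\to L^2(\Gamma)$, I obtain
$$|L_n(v)| \le \|g_n\|_{L^2(\Omega)}\|v\|_{L^2(\Omega)} + \|q_n\|_{L^2(\Gamma_2)}\|\gamma\|\,\|v\|_V,$$
which yields the uniform bound $\|L_n\|_{V^*} \le C$ with $C>0$ independent of $n$.

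\textbf{Step 2 (convergence).} I would write the algebraic decomposition
\begin{align*}
L_n(v_n) - L(v) = {}& \int_\Omega g_n(v_n - v)\,dx + \int_\Omega (g_n - g)\,v\,dx \\
&{}- \int_{\Gamma_2} q_n\,\gamma(v_n - v)\,d\Gamma - \int_{\Gamma_2} (q_n - q)\,\gamma v\,d\Gamma.
\end{align*}
The terms $\int_\Omega (g_n - g)\,v\,dx$ and $\int_{\Gamma_2}(q_n - q)\,\gamma v\,d\Gamma$ tend to zero directly from the hypothesised weak convergence, tested against the fixed elements $v \in L^2(\Omega)$ and $\gamma v \in L^2(\Gamma_2)$, respectively. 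For the remaining two terms, I invoke the compact embedding of $V$ into $L^2(\Omega)$ and the compactness of $\gamma\colon V\to L^2(\Gamma)$: since $v_n \to v$ weakly in $V$, these compactness properties upgrade the convergence to $v_n \to v$ strongly in $L^2(\Omega)$ and $\gamma v_n \to \gamma v$ strongly in $L^2(\Gamma_2)$. Combining with the uniform bounds on $\{g_n\}$ and $\{q_n\}$ from Step~1, Cauchy--Schwarz yields
$$\Bigl|\int_\Omega g_n(v_n - v)\,dx\Bigr| \le \|g_n\|_{L^2(\Omega)}\|v_n - v\|_{L^2(\Omega)} \longrightarrow 0,$$
and analogously the boundary integral vanishes. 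Summing the four contributions produces $L_n(v_n) \to L(v)$.

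The argument is essentially a bookkeeping exercise and I do not anticipate any serious obstacle. The only subtlety worth flagging is the order of the steps: the uniform bound on $\{g_n\}$ and $\{q_n\}$ must be in hand before the mixed terms $\int_\Omega g_n(v_n - v)\,dx$ and $\int_{\Gamma_2} q_n\,\gamma(v_n - v)\,d\Gamma$ can be controlled, since without such a bound the Cauchy--Schwarz estimate would not obviously tend to zero. The underlying mechanism is the familiar interaction between weak convergence of one factor and strong (i.e.\ compact-embedding-induced) convergence of the other.
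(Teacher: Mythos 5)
Your proof is correct and follows exactly the route the paper intends: the paper states Lemma~\ref{Lemma1} without a detailed proof, attributing it precisely to the compactness of the embedding $V \hookrightarrow L^2(\Omega)$ and of the trace operator from $V$ into $L^2(\Gamma)$, which is the mechanism your four-term decomposition exploits (with Banach--Steinhaus supplying the uniform bounds). Your write-up simply makes explicit the standard weak-times-strong argument the paper leaves implicit.
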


%Next,
%let $L$, $L_n \in V^*$, $n \in \nat$ and $\alpha > 0$ be fixed.
%For any $n \in \nat$,
%we denote by $u_n \in V_0$ a solution to problem (\ref{P1})
%corresponding to $L_n$.
%We skip the dependence of the solution on $\alpha$
%since it is fixed.

The continuous dependence result reads as follows.
\begin{Theorem}
Assume that $\alpha > 0$ is fixed, $L$, $L_n \in V^*$,
$n \in \nat$ and $H(j)$ holds.
%Assume that $H(j)$ holds, $L$, $L_n \in V^*$,
%$n \in \nat$ and $\alpha > 0$ is fixed.
Let $u_n \in V_0$, $n \in \nat$,
be a solution to problem \eqref{P1} corresponding to $L_n$,
and
\begin{equation}\label{LLL}
\lim L_n(z_n) = L(z) \ \ \mbox{\rm for any} \ \ z_n \to z
\ \mbox{\rm weakly in} \ V, \ \mbox{\rm as} \ n \to \infty.
\end{equation}
Then, there exists a subsequence of $\{ u_n \}$ which converges weakly in $V$ to a solution of problem \eqref{P1} corresponding to $L$.
If, in addition, the following hypotheses hold
\begin{eqnarray}
&&
\hspace{-1.0cm}
j^0(x, r; s-r) + j^0(x, s; r-s) \le m_j \, |r-s|^2
\ \ \mbox{\rm for all} \ \ r, s \in \real,
\ \mbox{\rm a.e.} \ x \in \Gamma_3,
%%\ \ \mbox{\rm with} \ \ m_j \ge 0,
\label{relaxed} \\ [2mm]
&&
\hspace{-1.0cm}
m_a > \alpha \, m_j \| \gamma\|^2, \label{small}
\end{eqnarray}
where $m_j \ge 0$,
then problem \eqref{P1} has the unique solution
$u$ and $u_n \in V_0$
corresponding to $L$ and $L_n$, respectively, and
the whole sequence $\{ u_n \}$ converges to $u$ in $V$,
as $n \to \infty$.
\end{Theorem}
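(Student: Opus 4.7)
The plan is to proceed in four stages: (i) a uniform a priori bound on $\{u_n\}$ in $V_0$, (ii) identification of a weak subsequential limit as a solution of \eqref{P1} corresponding to $L$, (iii) uniqueness under \eqref{relaxed}--\eqref{small}, and (iv) upgrading to strong convergence of the whole sequence in $V$. Throughout, the key structural ingredients are the coercivity \eqref{coercive} of $a$, the growth condition $H(j)$(c), the compactness of the trace operator $\gamma\colon V\to L^2(\Gamma)$, and the hypothesis \eqref{LLL} on $\{L_n\}$.

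For the a priori bound, I would test the inequality for $u_n$ with $v=-u_n\in V_0$. Arguing exactly as in the coercivity estimate~\eqref{STAR} of the proof of Theorem~\ref{existence}, the subadditivity and positive homogeneity of $j^0(x,r;\cdot)$ together with $H(j)$(d) and $H(j)$(c) yield
$\int_{\Gamma_3} j^0(u_n;-u_n)\,d\Gamma \le d_0+d_1\|u_n\|_{V_0}$. Combined with \eqref{coercive} and the uniform bound $\|L_n\|_{V^*}\le C$ (which follows from \eqref{LLL} and the uniform boundedness principle), this gives $\|u_n\|_{V_0}^2 \le \alpha(d_0+d_1\|u_n\|_{V_0}) + C\|u_n\|_{V_0}$, hence $\|u_n\|_{V_0}\le C'$ uniformly in $n$. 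Thus, along a subsequence (not relabelled), $u_n\to u$ weakly in $V$ with $u\in V_0$ (since $V_0$ is weakly closed), and by compactness of $\gamma$ we may assume $\gamma u_n\to \gamma u$ in $L^2(\Gamma_3)$, pointwise a.e. on $\Gamma_3$, and dominated by some $h\in L^2(\Gamma_3)$.

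The main obstacle is the passage to the limit in the nonconvex term. Fix $v\in V_0$. By linearity of $a(\cdot,v)$, $a(u_n,v)\to a(u,v)$; by \eqref{LLL} applied with $z_n\equiv v$, $L_n(v)\to L(v)$. For the hemivariational term, I would invoke the upper semicontinuity of $j^0(x,\cdot;v(x))$ from Proposition~\ref{PROP1}(iii) to obtain
\begin{equation*}
\limsup_{n\to\infty} j^0(x,u_n(x);v(x)) \le j^0(x,u(x);v(x)) \ \ \mbox{a.e. } x\in\Gamma_3,
\end{equation*}
and then use the growth bound $|j^0(x,u_n;v)|\le (c_0+c_1 h(x))|v(x)|\in L^1(\Gamma_3)$ from $H(j)$(c) to apply the reverse Fatou lemma, obtaining
$\limsup \int_{\Gamma_3} j^0(u_n;v)\,d\Gamma \le \int_{\Gamma_3} j^0(u;v)\,d\Gamma$.
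Taking $\limsup$ in the inequality for $u_n$ then yields $a(u,v)+\alpha\int_{\Gamma_3}j^0(u;v)\,d\Gamma \ge L(v)$ for every $v\in V_0$, i.e., $u$ solves \eqref{P1} with datum $L$.

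For uniqueness under \eqref{relaxed}--\eqref{small}, given two solutions $u_1,u_2$ of \eqref{P1} for $L$, test the $u_1$-inequality with $v=u_2-u_1$ and the $u_2$-inequality with $v=u_1-u_2$, add them, and apply \eqref{relaxed} to get
$\|u_1-u_2\|_{V_0}^2 \le \alpha m_j\|\gamma(u_1-u_2)\|_{L^2(\Gamma_3)}^2\le \alpha m_j\|\gamma\|^2\|u_1-u_2\|_V^2$. Combined with the coercivity $\|u_1-u_2\|_{V_0}^2\ge m_a\|u_1-u_2\|_V^2$ on $V_0$ and \eqref{small}, this forces $u_1=u_2$. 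Uniqueness for each $L_n$ (for $n$ large, since \eqref{small} does not involve $n$) also shows that all cluster points of $\{u_n\}$ coincide with the unique $u$, so the whole sequence converges weakly.

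Finally, for strong convergence, test the inequality for $u_n$ with $v=u-u_n$ and the one for $u$ with $v=u_n-u$, add, and use \eqref{relaxed} to obtain
\begin{equation*}
\|u_n-u\|_{V_0}^2 \le \alpha m_j\|\gamma(u_n-u)\|_{L^2(\Gamma_3)}^2 + (L_n-L)(u_n-u).
\end{equation*}
The first term vanishes by compactness of $\gamma$, and the second vanishes by writing $(L_n-L)(u_n-u)=\bigl(L_n(u_n)-L_n(u)\bigr)-\bigl(L(u_n)-L(u)\bigr)$ and applying \eqref{LLL} to each piece. Since $\|\cdot\|_{V_0}$ and $\|\cdot\|_V$ are equivalent on $V_0$, we conclude $u_n\to u$ in $V$, which completes the proof.
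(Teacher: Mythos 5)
Your proposal is correct and follows essentially the same four-stage route as the paper's proof: a uniform a priori bound, identification of the weak subsequential limit via the upper semicontinuity of $j^0$ (Proposition~\ref{PROP1}(iii)) together with pointwise domination and the dominated convergence theorem, uniqueness by testing with $u_2-u_1$ and $u_1-u_2$ under \eqref{relaxed}--\eqref{small}, and strong convergence by adding the two inequalities for $u_n$ and $u$. Your only local deviations are harmless and, if anything, tighten the argument: you test with $v=-u_n$ (reusing the estimate behind \eqref{STAR}) instead of the paper's choice $v=u_\infty-u_n$, you make explicit via Banach--Steinhaus the uniform bound $\|L_n\|_{V^*}\le C$ that the paper uses implicitly, and you dispose of the boundary term in the strong-convergence estimate by trace compactness rather than absorbing $\alpha\, m_j\|\gamma\|^2\|u_n-u\|_V^2$ through the smallness condition \eqref{small}, which incidentally shows that \eqref{small} is needed only for uniqueness in that final step.
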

\begin{proof}
Let $u_n \in V_0$ be a solution to problem (\ref{P1})
corresponding to $L_n$, and
$u_\infty \in K$ be the solution to problem (\ref{P2}).
%and
%$v = u_\infty - u_n \in V_0$ be the test function in problem %(\ref{P1}).
We have
\begin{equation*}
a(u_n, u_\infty -u_n)
+ \alpha \int_{\Gamma_{3}}
j^0(u_n; u_\infty-u_n) \, d\Gamma \ge L_n(u_\infty-u_n).
\end{equation*}
Hence
\begin{equation*}
a(u_\infty - u_n, u_\infty -u_n)
\le a(u_\infty, u_\infty-u_n) + L_n (u_n-u_\infty)
+ \alpha \int_{\Gamma_{3}}
j^0(u_n; b-u_n) \, d\Gamma.
\end{equation*}
From hypothesis $H(j)$(d),
since the form $a$ is bounded and coercive, we get
\begin{eqnarray*}
&&
m_a \| u_\infty - u_n \|_V^2
\le a(u_\infty, u_\infty-u_n) + L_n (u_n-u_\infty) \\ [2mm]
&&
\quad \le
M \| u_\infty \|_V \| u_\infty - u_n \|_V
+ \| L_n \|_{V^*} \| u_\infty - u_n \|_V,
\end{eqnarray*}
and subsequently
\begin{eqnarray*}
\| u_n \|_V \le \| u_\infty - u_n \|_V + \| u_\infty \|_V
\le
\frac{1}{m_a} (M \| u_\infty \|_V + k_1)
+ \| u_\infty \|_V \le k_2
\end{eqnarray*}
for all $n \in \nat$ with $k_1$, $k_2 > 0$ independent
of $n$.
Hence, $\{ u_n \}$ is uniformly bounded in $V$
and also in $V_0$.
From the reflexivity of $V_0$, there exist $\xi \in V_0$
and a subequence of $\{ u_n \}$, denoted in the same way,
such that
\begin{equation*}
u_n \to \xi \ \ \mbox{weakly in} \ V_0,
\ \mbox{as} \ n \to \infty.
\end{equation*}

We will show that $\xi \in V_0$ satisfies (\ref{P1}).
We know that $u_n \in V_0$ and
\begin{equation*}
a(u_n, v) + \alpha \int_{\Gamma_{3}}
j^0(u_n; v) \, d\Gamma \ge L_n(v)
\ \ \mbox{\rm for all} \ \ v \in V_0.
\end{equation*}
Taking the upper limit, we use the weak continuity of $a(\cdot, v)$ and (\ref{LLL}) to get
\begin{equation}\label{CCC}
a(\xi, v)
+ \alpha \limsup \int_{\Gamma_{3}}
j^0(u_n; v) \, d\Gamma \ge \lim L_n(v) = L(v)
\ \ \mbox{\rm for all} \ \ v \in V_0.
\end{equation}
By the compactness of the trace operator from $V$ into $L^2(\Gamma_{3})$, we have
$u_n \big|_{\Gamma_{3}} \to \xi \big|_{\Gamma_{3}}$ in $L^2(\Gamma_3)$, as $n \to \infty$,
and at least for a subsequence,
$u_n(x) \to \xi(x)$ for a.e. $x \in \Gamma_3$
and $|u_n(x)| \le \eta(x)$ a.e. $x \in \Gamma_3$,
where $\eta \in L^2(\Gamma_3)$.
Since the function
$\real\times \real \ni (r, s) \mapsto j^0(x, r; s) \in \real$
a.e. on $\Gamma_3$ is upper semicontinuous, see Proposition~\ref{PROP1}(iii),
%%~\cite[Proposition~3.23(ii)]{MOS},
we obtain
$$
\limsup j^0(x, u_n(x); v(x)) \le j^0(x, \xi(x); v(x))
\ \ \mbox{a.e.} \ \ x \in \Gamma_3.
$$
Recalling the estimate
\begin{equation*}
|j^0(x, u_n(x); v(x))|
\le (c_0 + c_1 |u_n(x)|) \, |v(x)| \le k(x)
\ \ \mbox{a.e.} \ \ x \in \Gamma_3
\end{equation*}
where $k \in L^1(\Gamma_3)$,
$k(x) = (c_0 + c_1 \eta(x)) |v(x)|$, we apply the dominated convergence theorem, see~\cite[Theorem~2.2.33]{DMP1} to get
$$
\limsup \int_{\Gamma_3} j^0(u_n; v) \, d\Gamma
\le \int_{\Gamma_3} \limsup j^0(u_n; v) \, d\Gamma
\le \int_{\Gamma_3} j^0(\xi; v)\, d\Gamma.
$$
Using the latter in (\ref{CCC}) entails
\begin{equation}\label{DDD}
a(\xi, v)
+ \alpha \int_{\Gamma_{3}}
j^0(\xi; v) \, d\Gamma \ge L(v)
\ \ \mbox{\rm for all} \ \ v \in V_0,
\end{equation}
which means that $\xi \in V_0$ is a solution to problem (\ref{P1}), and completes the first part of the proof.

Next, in addition, we assume (\ref{relaxed}) and (\ref{small}).
The existence of solution to (\ref{P1}) follows from the first part of the theorem.
To prove uniqueness, let $u_1$, $u_2 \in V_0$ solve (\ref{P1}). Then taking as test functions
$u_2-u_1\in V_0$ for $u_1$ and $u_1-u_2\in V_0$ for $u_2$, and adding corresponding inequalities, we obtain
\begin{equation*}
a(u_1-u_2, u_2-u_1) + \alpha \int_{\Gamma_{3}} \left( j^0(u_1; u_2-u_1) + j^0(u_2; u_1-u_2) \right)	
\, d\Gamma \ge 0.
\end{equation*}
From the coercivity of the form $a$ and (\ref{relaxed}),
we have
\begin{equation*}
m_a \, \| u_1-u_2 \|^2_V
\le \alpha \, m_j \int_{\Gamma_{3}} | u_1(x) - u_2(x) |^2 \, d\Gamma
\le \alpha \, m_j \| \gamma \|^2 \, \| u_1-u_2 \|^2_V.
\end{equation*}
Hence, $(m_a - \alpha \, m_j \| \gamma \|^2) \, \| u_1-u_2 \|^2_V \le 0$,
and by the smallness condition (\ref{small}), we get $u_1=u_2$.

Hence, we deduce that solutions $u$, $u_n \in V_0$
to (\ref{P1}) are unique, and by (\ref{DDD}),
we immediately have $\xi = u$.

Finally, we will show the strong convergence of $\{u_n \}$
to $u$ in $V$. We choose suitable test functions from $V_0$ in (\ref{P1}) and (\ref{DDD}) to obtain
\begin{eqnarray*}
	&&
	a(u_n, u-u_n) + \alpha \int_{\Gamma_{3}}
	j^0(u_n; u-u_n) \, d\Gamma \ge L_n(u-u_n) \\
	%\ \ \mbox{\rm for all} \ \ v \in V_0.
	&&
	a(u, u_n-u) + \alpha \int_{\Gamma_{3}}
	j^0(u; u_n-u) \, d\Gamma \ge L(u_n-u).
\end{eqnarray*}
Adding the two inequalities, we have
\begin{equation*}
a(u_n-u, u-u_n) + \alpha \int_{\Gamma_{3}}
\left( j^0(u_n; u-u_n) + j^0(u; u_n-u) \right)	
\, d\Gamma \ge L_n(u-u_n) + L(u_n-u).
\end{equation*}
Using the coercivity of the form $a$ and (\ref{relaxed}),
we get
\begin{equation*}
m_a \, \| u_n-u \|^2_V
\le \alpha \, m_j \| \gamma \|^2 \, \| u_n-u \|^2_V
+ L_n(u_n-u) + L(u-u_n)
\end{equation*}
which entails
$$
(m_a - \alpha \, m_j \| \gamma \|^2) \, \| u_n-u \|^2_V
\le L_n(u_n-u) + L(u-u_n).
$$
From hypotheses (\ref{LLL}) and (\ref{small}), we deduce
that $\| u_n - u\|_V \to 0$, as $n\to \infty$.
Since $u \in V_0$ is unique, we infer that the whole
sequence $\{u_n \}$ converges weakly in $V$ to $u$.
This proof is complete.
\end{proof}

\begin{Remark}\label{RRCC}
{\rm
It is known that for a locally Lipschitz function $j \colon \real \to \real$,
the condition (\ref{relaxed}) is equivalent to the so-called relaxed monotonicity condition of the subdifferential
\begin{equation}\label{RRR}
(\eta_1 - \eta_2) (r_1-r_2)
\ge - m_j \, | r_1 -r_2 |^2
\end{equation}
for all $r_i \in \real$, $\eta_i \in \partial j(r_i)$,
$i=1$, $2$.
The latter was extensively used in the literature, see~\cite{MOS} and the references therein.
Condition (\ref{relaxed}) can be verified by
proving that the function
$$
\real \ni r \mapsto j(r) + \frac{m_j}{2} |r|^2 \in \real
$$
is nondecreasing.
An example of a nonconvex function which satisfies the condition (\ref{relaxed}) is given in Example~\ref{EXA1}.
Note that if
$j \colon \real \to \real$ is convex, then
(\ref{relaxed}) and (\ref{RRR}) hold with $m_j =0$.
In fact, by convexity,
$$
j^0(r; s-r) \le j(s) - j(r)\quad {\rm and} \quad
j^0(s; r-s) \le j(r) - j(s)
$$
for all $r$, $s \in \real$ which imply
$j^0(r; s-r) + j^0(s; r-s) \le 0$.
Therefore, for a convex function $j \colon \real \to \real$,
condition $(\ref{relaxed})$ or, equivalently, (\ref{RRR}) reduces to monotonicity of the (convex) subdifferential, i.e., $m_j = 0$.
}
\end{Remark}

\section{Examples}\label{Examples}

The following examples provide nonconvex and convex functions which satisfies the hypotheses $H(j)$, $(H_1)$ and (\ref{relaxed}).
\begin{Example}\label{EXA1}
{\rm
Let $j \colon \real \to \real$ be the function defined by
\begin{equation*}
j(r) =
\begin{cases}
(r-b)^2 &\text{{\rm if} \ \  $r < b,$} \\
1-e^{-(r-b)} &\text{{\rm if} \ \ $r\ge b$}
\end{cases}
\end{equation*}
for $r \in \real$ with a constant $b \in \real$.
This function is nonconvex, locally Lipschitz and its subdifferential
is given by
\begin{equation*}
\partial j(r) =
\begin{cases}
2(r-b) &\text{{\rm if} \ \  $r < b,$} \\
[0,1] &\text{{\rm if} \ \ $r = b,$} \\
e^{-(r-b)} &\text{{\rm if} \ \ $r > b$}
\end{cases}
\end{equation*}
for all $r \in \real$.
Hence, we have
$|\partial j(r)| \le 1+ 2|b| + 2|r|$ for all $r \in \real$.
Moreover, using Proposition~\ref{PROP1}(ii), one has
\begin{equation*}
j^0(r; b-r)
=\max\{ \zeta \, (b-r) \mid \zeta \in \partial j(r) \}
=
\begin{cases}
-2(b-r)^2 &\text{{\rm if} \ \  $r < b,$} \\
0 &\text{{\rm if} \ \ $r = b,$} \\
e^{-(r-b)}(b-r) &\text{{\rm if} \ \ $r > b$}
\end{cases}
\end{equation*}
for all $r \in \real$. Thus $H(j)$ is satisfied.
By the above formula, we also infer that $(H_1)$
is satisfied.
Further, we show that condition (\ref{relaxed}) holds
with $m_j = 1$.
The condition (\ref{relaxed}) is equivalent to the relaxed monotonicity of the subdifferential
$$
(\partial j(r) - \partial j(s))(r-s) \ge - |r-s|^2
\ \ \mbox{for all} \ \ r, s \in \real.
$$
%for all $r$, $s \in \real$.
The latter means that
$$
\left(
\partial \left(j(r)+\frac{1}{2}r^2 \right)
- \partial \left(
(j(s)+\frac{1}{2} s^2 \right) \right) (r-s) \ge 0
\ \ \mbox{for all} \ \ r, s \in \real,
$$
%for all $r$, $s \in \real$,
i.e., the subdifferential $\partial \psi$ of the function $\psi \colon \real \to \real$
defined by
$\psi (r) = j(r) + \frac{1}{2} r^2$
is monotone.
Now, the monotonicity of $\partial \psi$ can be verified
using the formula
\begin{equation*}
\partial \psi(r) =
\begin{cases}
3 r - 2 b &\text{{\rm if} \ \  $r < b,$} \\
[b, b +1] &\text{{\rm if} \ \ $r = b,$} \\
e^{-(r-b)} + r &\text{{\rm if} \ \ $r > b$}
\end{cases}
\end{equation*}
for all $r \in \real$. We conclude that $H(j)$, $(H_1)$ and (\ref{relaxed}) are satisfied.

}
\end{Example}

\begin{Example}
{\rm (see~\cite[Example~3]{MO})
Let the function $j \colon \real \to \real$ be given by
$$j(r) = \min \{ j_1(r), j_2(r) \}$$
for $r \in \real$,
where $j_i \colon \real \to \real$ are convex, quadratic and such that $j_i'(b)=0$, $i=1$, $2$.
It is known, see~\cite[Theorem~2.5.1]{C},
that
$$
\partial j(r) \subset {\rm conv} \{ j_1'(r), j_2'(r) \}
\ \ \mbox{for all} \ \ r \in \real,
$$
so, the subgradient of $j$ has at most a linear growth.
Using the monotonicity of the subgradient of convex function, we get
$$
0\leq \left(j_i'(b)-j_i'(r)\right)(b-r) = -j_i'(r)(b-r)
\ \ \mbox{for all} \ \ r \in \real,\ i=1, 2,
$$
and, by Proposition~\ref{PROP1}(iii), we have
\begin{eqnarray*}
&&
j^0(r; b-r)
= \max \{ \zeta (b-r) \mid \zeta \in \partial j(r)\} \\ [2mm]
&&\quad
= \max \{ \left(\lambda j_1'(r) + (1-\lambda) j_2'(r)\right)(b-r) \mid \lambda \in [0, 1]\} \le 0.
\end{eqnarray*}
Hence, we deduce that condition $H(j)$ is satisfied. Similarly, if $j^0(r; b-r)=0$ for all $r\in\real$, then
$\lambda j_1'(r) (b-r) = 0$ and $(1-\lambda) j_2'(r)(b-r)=0$ for all $r \in \real$ with $\lambda \in [0,1]$, which is possible when $r=b$.
So, $j$ satisfies also $(H_1)$. Further, it is easy to observe that in the case when the graphs of functions $j_1$ and $j_2$ have two common points, then the function $j$ is nonconvex.
}
\end{Example}

\begin{Example}
{\rm
Let $j\colon \real \to \real$ be the function defined
by
$$
j(r)=\frac{1}{2}(r-b)^{2}
$$
for $r \in \real$ with $b \in \real$.
Then
$$
j^{0}(r; s)=(r-b)\, s \ \ \mbox{and} \ \
\partial j(r)=r-b
$$
for $r$, $s \in \real$.
Moreover, we have
$j^0(r; b-r) = (r-b)\, (b-r) = - (b-r)^2 \le 0$
for all $r \in \real$.
Also,
for all $r \in \real$, if $j^0(r; b-r) = 0$, then
$(r-b)\, (b-r) = - (b-r)^2 = 0$, which implies $r =b$.
Hence we deduce that $j$ satisfies properties $H(j)$ and
$(H_1)$.
By Remark~\ref{RRCC}, it is clear that $j$ satisfies
(\ref{relaxed}) with $m_j =0$.
}
\end{Example}

\begin{Example}
{\rm
Let $m_{1}$, $m_{2}$, $r_{0}\in \mathbb{R}$ be constants
such that
$m_{1}\leq -r_{0}<0$ and $m_{2}\geq r_{0}>0$. Consider
the function $j\colon \real \to \real$ defined by
\begin{equation*}
j(r) =
\begin{cases}
\frac{r_{0}^{2}}{2}+m_{1}[r-(b-r_{0})]
&\text{{\rm if} \ \  $r< b-r_{0},$} \\[2mm]
\frac{1}{2}(r-b)^2 &\text{{\rm if} \ \  $b-r_{0}\leq r\leq b+r_{0},$} \\[2mm]
\frac{r_{0}^{2}}{2}+m_{2}[r-(b+r_{0})]
&\text{{\rm if} \ \ $r> b+r_{0}$}
\end{cases}
\end{equation*}
for $r \in \real$, $b \in \real$.
The function $j$ is convex, its subdifferential
is given by
\begin{equation*}
\partial j(r) =
\begin{cases}
m_1 &\text{{\rm if} \ \  $r< b-r_{0},$} \\
\left[ m_{1},-r_{0}\right]
&\text{{\rm if} \ \  $r= b-r_{0},$} \\
r-b &\text{{\rm if} \ \ $b-r_{0}< r < b+r_{0},$} \\
\left[r_{0},m_{2}\right] &\text{{\rm if} \ \ $r= b+r_{0},$} \\
m_2 &\text{{\rm if} \ \  $r> b+r_{0}$} \\
\end{cases}
\end{equation*}
for all $r \in \real$, and its generalized
directional derivative has the form
\begin{equation*}
j^0(r; b-r) =
\begin{cases}
m_{1}(b-r) < 0 &\text{{\rm if} \ \  $r< b-r_{0},$} \\
m_{1}r_{0}  < 0 &\text{{\rm if} \ \  $r= b-r_{0},$} \\
-(b-r)^{2} \leq 0 &\text{{\rm if}\ \ $b-r_{0}< r < b+r_{0},$}\\
r_{0}(b-r)  < 0 &\text{{\rm if} \ \ $r= b+r_{0},$} \\
m_{2}(b-r)< 0 &\text{{\rm if} \ \  $r> b+r_{0}$} \\
\end{cases}
\end{equation*}
for all $r$, $s \in \real$.
Hence, we obtain that $j^{0}(r; b-r)\leq 0$ for all
$r \in \real$. Similarly,
if $j^{0}(r,b-r)=0$ for all $r \in \real$, then $r=b$.
We conclude that $j$ satisfies $H(j)$ and $(H_1)$.
Moreover, the function $j$, being convex,  satisfies (\ref{relaxed}) with $m_j =0$, see Remark~\ref{RRCC}.
}
\end{Example}

\begin{Example}
{\rm
We define $j \colon \real \to \real$ by
\begin{equation*}
j(r) = |r-b| =
\begin{cases}
-r+b &\text{{\rm if} \ \  $r \le b,$} \\
r-b &\text{{\rm if} \ \ $r > b$}
\end{cases}
\end{equation*}
for $r \in \real$ with a constant $b \in \real$.
Then, we have
\begin{equation*}
\partial j(r) =
\begin{cases}
-1 &\text{{\rm if} \ \  $r < b,$} \\
[-1, 1] &\text{{\rm if} \ \  $r = b,$} \\
1 &\text{{\rm if} \ \ $r > b$}
\end{cases}
\end{equation*}
for all $r \in \real$, and
\begin{equation*}
j^0(r; b-r) =
\begin{cases}
b-r &\text{{\rm if} \ \  $r > b,$} \\
0 &\text{{\rm if} \ \  $r = b,$} \\
r-b &\text{{\rm if} \ \ $r < b$}
\end{cases}
\end{equation*}
for all $r \in \real$. Thus,
$j^0(r; b-r) \le 0$ for all $r \in \real$.
Also, we observe that if $j^{0}(r; b-r)=0$
for all $r \in \real$, then $r=b$. In consequence,
the properties $H(j)$ and $(H_1)$ are verified.
Further, since $j$ is convex, it satisfies (\ref{relaxed})
with $m_j =0$, see Remark~\ref{RRCC}.
}
\end{Example}

%If we define $J:V_{0}\rightarrow R$ such that %$J(v)=\int_{\Gamma_{3}}j(\gamma_{0}(v))d\gamma$,
%we obtain for the problem (\ref{Pjalfa}), the following %variational inequality of second kind
%\begin{equation}
%a(u,v-u)+\alpha J(v)-\alpha J(u)\geq L(v-u),\text{ }\forall %v\in V_{0},\,\, u\in V_{0}  \label{Pjalfavariacional}
%\end{equation}

\section{Conclusions}

We have studied the nonlinear elliptic problem with mixed boundary conditions involving a nonmonotone multivalued subdifferential boundary condition on a part of the boundary.
Based on the notion of the Clarke generalized gradient,
the variational form of the problem leads to an elliptic boundary hemivariational inequality. We have provided results
on existence, comparison of solutions and continuous dependence on the data.
Sufficient conditions have been found which guarantee the asymptotic behavior of solution, when the heat transfer coefficient tends to infinity, to a problem with the Dirichlet boundary condition.
Under our hypotheses, the proof of the monotoni\-ci\-ty property of Theorem~\ref{teor1}(iv) for the elliptic hemivariational inequality (\ref{Pj0alfavariacional}) remains an interesting open problem. We have also given some examples of locally Lipschitz (nondifferentiable and nonconvex) functions to which our results can be applied.

%We have found three sufficient conditions (H3), (H4) and %(H5) to a general Robin boundary condition defined through %the Clarke derivate of a Lipschitz function in order to converge to a Dirichlet boundary condition. This convergence is obtained by using the theory of elliptic hemivariational inequalities. \newline We have also found two new sufficient conditions (H1) and (H2) in order to obtain monotony properties on the convergent sequence defined through the Robin boundary condition. \newline We also give some examples of locally Lipschitz function (non-differentiable function) defined on a portion of a boundary in a n-dimensional domain in which we solve the Poisson equation with mixed boundary conditions.

%\section*{Acknowledgements}
%The present work has been partially sponsored by the European
%Union’s Horizon 2020 Research and Innovation Programme under %the Marie Sklodowska - Curie grant agreement 823731 CONMECH. %It has also partially sponsored by the Project PIP No. 0275 %from CONICET – UA, Rosario, Argentina for the third and %fourth authors.

%%=========================================

\end{document}